\def\@cite#1#2{{\m@th\upshape\bfseries%
[{#1\if@tempswa{\m@th\upshape\mdseries, #2}\fi}]}}
\numberwithin{equation}{section}
\newtheorem{theorem}{Theorem}[section]
\newtheorem{lemma}[theorem]{Lemma}
\newtheorem{proposition}[theorem]{Proposition}
\newtheorem{corollary}[theorem]{Corollary}
\theoremstyle{definition}
\newtheorem{definition}[theorem]{Definition}
\newtheorem{example}[theorem]{Example}
\newcommand{\be}{\begin{equation}}
\newcommand{\ee}{\end{equation}}
\newcommand{\bes}{\begin{equation*}}
\newcommand{\ees}{\end{equation*}}
\newcommand{\cH}{\mathcal{H}}
\newcommand{\cK}{\mathcal{K}}
\newcommand{\cA}{\mathcal{A}}
\newcommand{\cO}{\mathcal{O}}
\newcommand{\cT}{\mathcal{T}}
\newcommand{\Aut}{\operatorname{Aut}}
\newcommand{\id}{\operatorname{id}}
\renewcommand{\phi}{\varphi}
\newcommand{\proj}{\operatorname{proj}}
\newcommand{\Out}{\operatorname{Out}}
\newcommand{\Inn}{\operatorname{Inn}}
\begin{document}

\title{Automorphisms and dilation theory of triangular UHF algebras}
\author{Christopher Ramsey}
\address{University of Waterloo, Waterloo, ON, Canada}
\email{ciramsey@uwaterloo.ca}

\begin{abstract}
We study the triangular subalgebras of UHF algebras which provide new examples of algebras with the Dirichlet property and the Ando property. This in turn allows us to describe the semicrossed product by an isometric automorphism. We also study the isometric automorphism group of these algebras and prove that it decomposes into the semidirect product of an abelian group by a torsion free group. Various other structure results are proven as well.
\end{abstract}

\subjclass[2010]{47L40, 47L55, 47A20}
% for regular amsart article

%\subjclass{47L40, 47L55, 47A20}
% for birkjour class article

\keywords{Non-selfadjoint operator algebras, UHF algebras}

\maketitle

\section{Introduction}
A unital non-selfadjoint operator algebra is a triangular UHF algebra if it is the closed union of a chain of unital subalgebras each isomorphic to a full upper triangular matrix algebra. That is, such an algebra can be thought of as the upper triangular part of a UHF algebra. These were extensively studied by Power \cite{Power} and many others in the early 90's.  

In their recent paper \cite{DavKat}, Davidson and Katsoulis refine various notions of dilation theory, commutant lifting and Ando's theorem for non-selfadjoint operator algebras and show that these notions become simpler when the algebras have the semi-Dirichlet property. Moreover, if the operator algebra has this nice dilation theory then one can describe the C$^*$-envelope of the semicrossed product of the operator algebra by an isometric automorphism. However, almost all examples of such algebras arose from tensor algebras of C$^*$-correspondences, the exception being given recently by E. T. A. Kakariadis in \cite{Kakariadis13}, which leads to the question whether other examples exist. While it is unknown (at least to the author) whether a triangular UHF algebra is isomorphic to some tensor algebra it does provide a new example of an operator algebra which has the Dirichlet property and the Ando property. 

This paper also addresses the isometric automorphism group of such triangular UHF algebras. We prove in section 3 that this group can be decomposed into a semidirect product of approximately inner automorphisms by outer automorphisms and that the outer automorphism group is torsion free. Section 4 provides a different proof to that of Power's in \cite{Power2} showing that the outer automorphism group of the triangular UHF algebra with alternating embeddings is determined by a pair of supernatural numbers associated to the algebra. Section 5 develops a method of tensoring the embeddings of two triangular UHF algebras to create a new algebra which combines the automorphic structure of both, giving a slightly richer perspective on what groups one can obtain.

%%%%%%%%%%%%%%%%%%%%%%%%%%%%%%%%%%%%%%%%
%%%%%%%%%%%%%%%%%%%%%%%%%%%%%%%%%%%%%%%%

\section{Triangular UHF algebras}

A C$^*$-algebra is called {\em uniformly hyperfinite} ({\em UHF}) (or a {\em Glimm algebra}) if it is the closed union of a chain of unital subalgebras each isomorphic to a full matrix algebra. In other words, suppose we have integers $k_n, n\in \mathbb N$ such that $k_n | k_{n+1}$, for all $n$, and unital C$^*$-algebra embeddings $\varphi_n: M_{k_n} \rightarrow M_{k_{n+1}}$ then $\mathfrak A_\varphi = \overline{\bigcup_{n} M_{k_n}}$ is a UHF algebra.
Such a sequence of integers $k_n | k_{n+1}$ defines a formal product $\delta(\mathfrak A_\varphi) := \prod_{p \ {\rm prime}} p^{\delta_p}$, where $\delta_p \in \mathbb N \cup \{\infty\}$, called a {\em supernatural number} or {\em generalized integer}.
A famous theorem of Glimm's \cite{Glimm} states that two UHF algebras are isomorphic if and only if they have the same generalized integers. In particular, the choice of unital embeddings does not make a difference. See \cite{Dav, Power} for more on UHF algebras and approximately finite-dimensional (AF) C$^*$-algebras, where such an algebra is defined to be a closed union of a chain of finite dimensional subalgebras.

Let $\cT_k$ be the upper triangular matrices of $M_k$ then we have the following definition:
\begin{definition}
Consider a UHF algebra $\mathfrak A_\varphi = \overline{\bigcup_{n} M_{k_n}}$ where $\varphi_n: M_{k_n} \rightarrow M_{k_{n+1}}$ are unital embeddings and assume that $\varphi_n(\cT_{k_n}) \subset \cT_{k_{n+1}}$. 
Then $\cT_\varphi = \overline{\bigcup_n \cT_{k_n}}$ is called a {\em triangular UHF (TUHF) algebra}.
\end{definition}
In contrast to Glimm's theorem we must take note of the embeddings as different embeddings lead to non-isomorphic algebras \cite{Power}. Hence, in the above definition $\varphi = \{\varphi_1,\varphi_2,\cdots\}$ is the collection of embeddings. Two of the simplest embeddings are:

\begin{definition}
The {\em standard} embedding of $\cT_k$ into $\cT_{k'}$ when $k | k'$ is 
\[
A \in \cT_k \ \mapsto \ I_{k'/k} \otimes A = \left[\begin{array}{cccc} A \\ & A \\ &&\ddots \\ &&&A\end{array}\right] \in \cT_{k'}
\]
\end{definition}

\begin{definition}
The {\em nest} or {\em refinement} embedding of $\cT_k$ into $\cT_{k'}$ when $k | k'$ is 
\[
A \in \cT_k \ \mapsto \ A\otimes I_{k'/k} \in \cT_k'
\]
or in other words
\[
\left[\begin{array}{cccc} a_{11} &\cdots&\cdots& a_{1k} \\ 0 & \ddots&&\vdots \\ \vdots &\ddots &\ddots&\vdots \\ 0 &\cdots&0&a_{kk}\end{array}\right]   \ \mapsto \ 
 \left[\begin{array}{cccc} 
a_{11}\cdot I_{k'/k} &\cdots&\cdots& a_{1k}\cdot I_{k'/k} \\ 0\cdot  I_{k'/k}  & \ddots&&\vdots \\ \vdots &\ddots &\ddots&\vdots \\ 0\cdot I_{k'/k} &\cdots&0\cdot I_{k'/k} &a_{kk}\cdot I_{k'/k}\end{array}\right].
\]
\end{definition}

Central to the theory of non-selfadjoint operator algebras is the notion of a C$^*$-envelope \cite{Arveson06, DritschelMcCullough, Hamana, Kakariadis12}, which can be thought of as the smallest C$^*$-algebra that contains the operator algebra. It is immediate in this case that the C$^*$-envelope, $C^*_e(\cT_\varphi)$, is equal to $C^*(\cT_\varphi) = \mathfrak A_\varphi$ because all UHF algebras are simple. 

Distinct from the theory of UHF algebras is that there is a partial order on ${\rm Proj}(\cT_\varphi)$ which is not the subprojection partial order.

\begin{definition}
If $p, q\in \cT$ are projections then we say $p \preceq q$ if there is a partial isometry $v\in \cT$ such that $vv^* = p$ and v*v = q.
\end{definition}

We will use $e_i^{k_n}$ to denote $e_{i,i}\in \cT_{k_n}$, the minimal projections at each level, and similarly $e_{i,j}^{k_n}$ to denote $e_{i,j}\in \cT_{k_n}$. From the previous definition we have $e_i^{k_n} \preceq e_j^{k_n}$ and $e_j^{k_n} \npreceq e_i^{k_n}$ for $i\leq j$.

%%%%%%%%%%%%%%

A subalgebra $\cT$ of a UHF algebra is {\em triangular} if $\cT \cap \cT^*$ is abelian. In the terminology of \cite{Power} our TUHF algebras are {\em strongly maximal triangular} in that there is no other triangular algebra sitting strictly between $\cT_\varphi$ and $\mathfrak A_\varphi$. 
Observe that $\varphi_{n}(\cT_{k_n} \cap \cT_{k_n}^*) \subset \cT_{k_{n+1}} \cap \cT_{k_{n+1}}^*$, that is the diagonal is mapped to the diagonal.
So there is a {\em maximal abelian self-adjoint subalgebra} (masa) $C_\varphi \subset \cT_\varphi$ defined as 
\[
C_\varphi \ = \ \cT_\varphi \cap \cT_\varphi^* \ =  \ \overline{\bigcup_n \cT_{k_n} \cap \cT_{k_n}^*} \ \simeq \  \overline{\bigcup_n C_{n}} \ := \ \overline{\bigcup_n \mathbb C^{k_n}}.
\]
Hence, $C_\varphi$ is an AF C$^*$-algebra and $C_\varphi \simeq C(X)$ where the Gelfand space is a generalized Cantor set:
\[
M(C_\varphi) = X = \prod_{n\geq 1} \left[\frac{k_n}{k_{n-1}}\right],
\]
with $k_0 = 1$ to make the formula work and where $[k] = \{0,1,\cdots, k-1\}$.
We will often refer to $C_\varphi$ as the diagonal of $\cT_\varphi$. 
%There is a unique trace $\tau$ on $\cT_\varphi$ given by setting $\tau(e_i^{(k_n)}) = \frac{1}{k_n}$ and linearly extending. 
%If trace is included show how it is constructed: each trace on a finite level works with the embeddings
For each point $x\in X$ there is a unique sequence of projections
\[
e_{i_1}^{k_1} \geq e_{i_2}^{k_2} \geq e_{i_3}^{k_3} \geq \cdots
\]
with $x(e_{i_n}^{k_n}) = 1$ for all $n\geq 1$. Define a partial order on $X$ by letting the following be equivalent for $x = (x_n)_{n\geq 1}, y=(y_n)_{n\geq 1} \in \prod_{n\geq 1} \left[\frac{k_n}{k_{n-1}}\right] = X$ which have sequences of projections $e_{i_n}^{k_n}$ and $e_{j_n}^{k_n}$ respectively:
\begin{itemize}
\item[1)] $x\leq y$,
\item[2)] $\exists\: n$ such that $(x_1,\cdots, x_n) \leq (y_1,\cdots, y_n)$ in the lexicographic order and $x_{n'} = y_{n'}, \forall n'>n$,
\item[3)] $\exists\: n$ such that $e_{i_n}^{k_n} \preceq e_{j_n}^{k_n}$ and $e_{i_{n'}}^{k_{n'}} = e_{i,j}^{k_n} e_{j_{n'}}^{k_{n'}} (e_{i,j}^{k_n})^*$ for all $n' > n$. 
\end{itemize}
Thus, this is a partial order on tail-equivalent sequences. Let $E_{ij}^{k_n}$ be all such pairs $(x,y)\in X\times X$ that depend on $i,j$ and $n$ in the above definition.

\begin{definition}
The {\em topological binary relation of $\cT_\varphi$ relative to $C_\varphi$} is 
\[
R(\cT_\varphi) = \bigcup\{ E_{ij}^{k_n} : e_{i,j}^{k_n} \in \cT_\varphi, n\geq 1\},
\]
equipped with the topology defined by basic clopen sets
\[
\{x \in X : x(e_i^{k_n}) = 1\}, \ \ n\geq 1, 1\leq i\leq k_n.
\]
\end{definition}

Lastly, we define the {\em normalizer} of $C_n$ in $\cT_{k_n}$ as
\[
N_{C_n}(\cT_{k_n})  \ = \ \{v\in \cT_{k_n} \ {\rm partial \ isometry} \ : vC_{n}v^* \subset C_n, v^*C_n v \subset C_n \}.
\]
It is not hard to see that any element of $N_{C_n}(\cT_{k_n})$ is the multiplication of a diagonal unitary by a partial permutation matrix, that is, where there is at most one 1 in each row and column.
We say that an embedding $\varphi: \cT_{k_n} \rightarrow \cT_{k_{n+1}}$ is a {\em regular embedding} if it takes partial permutation matrices to partial permutation matrices, which in turn implies that $\varphi(N_{C_n}(\cT_{k_n})) \subset N_{C_{n+1}}(\cT_{k_{n+1}})$. Note that the standard and nest embeddings are regular embeddings.

In the same way, define the normalizer of $C_\varphi$ in $\cT_\varphi$:
\[
N_{C_\varphi}(\cT_\varphi) \ = \ \{v\in \cT_{\varphi} \ {\rm partial \ isometry} \ : vC_\varphi v^* \subset C_\varphi, v^*C_\varphi v \subset C_\varphi \}.
\]
The following lemma by Power gives a decomposition of any element in the normalizer into a product of a unitary and a partial permuation matrix. Note that $U(C_\varphi)$ denotes the unitary group of $C_\varphi$.
\begin{lemma}[\cite{Power}, Lemma 5.5]\label{normalizer}
Let $\cT_\varphi$ have regular embeddings. Then $v\in N_{C_\varphi}(\cT_\varphi)$ if and only if $v=dw$ where $w\in N_{C_n}(\cT_{k_n})$, for some $n$, and $d \in U(C_\varphi)$, a diagonal unitary. Moreover, $w$ can be chosen to be a partial permutation matrix which makes the decomposition unique.
\end{lemma}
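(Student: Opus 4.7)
My plan is to prove both implications separately, with the bulk of the work residing in the \emph{only if} direction.

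The \emph{if} direction I would handle first, and briefly. Iterated regularity of the embeddings $\varphi_m$ for $m \geq n$ preserves the partial-permutation form of $w$, so $w \in N_{C_m}(\cT_{k_m})$ for every $m \geq n$. Taking closures yields $w C_\varphi w^* \subset C_\varphi$ and $w^* C_\varphi w \subset C_\varphi$, and since $d \in U(C_\varphi)$, the product $v = dw$ normalizes $C_\varphi$.

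For the converse, I start with $v \in N_{C_\varphi}(\cT_\varphi)$ and first locate a finite level at which the source and range projections already live. The projections $v^*v$ and $vv^*$ lie in $C_\varphi$, hence correspond to clopen subsets of the totally disconnected Gelfand space $X$; since every clopen subset of $X$ depends on only finitely many coordinates, $v^*v, vv^* \in C_N$ for some $N$. Next I consider the partial homeomorphism $\tau_v : \mathrm{supp}(v^*v) \to \mathrm{supp}(vv^*)$ of $X$ determined by $v f v^* = f \circ \tau_v^{-1}$. The crucial and hardest step is to find a level $M \geq N$ at which $\tau_v$ is \emph{cylindrically rigid}, in the sense that each cylinder at level $M$ in its domain is mapped bijectively onto a single cylinder at level $M$ in its range. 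I expect this to follow from a standard AF-perturbation argument: approximate $v$ in norm by an element of $\cT_{k_m}$ and use uniform continuity of $\tau_v$ and $\tau_v^{-1}$ on their compact clopen domains to pass to a high enough level. At this $M$ one obtains a partial permutation $\sigma$ of indices with $v e_j^{k_M} v^* = e_{\sigma(j)}^{k_M}$ for each minimal projection $e_j^{k_M} \leq v^*v$.

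With $M$ and $\sigma$ in hand, the triangularity $v \in \cT_\varphi$ is essentially free: since $v e_j^{k_M}$ is a partial isometry in $\cT_\varphi$ from $e_j^{k_M}$ to $e_{\sigma(j)}^{k_M}$, the relation $e_{\sigma(j)}^{k_M} \preceq e_j^{k_M}$ holds, and by the observation that $e_i^{k_M} \preceq e_j^{k_M}$ iff $i \leq j$, this forces $\sigma(j) \leq j$. So $w := \sum_j e_{\sigma(j), j}^{k_M}$ is a partial permutation matrix in $\cT_{k_M}$ and lies in $N_{C_M}(\cT_{k_M})$. Finally I set $d := vw^* + (1 - vv^*)$ and verify that $d \in U(C_\varphi)$ and $dw = v$; the key point for $d \in C_\varphi$ is that $vw^*$ induces the identity partial homeomorphism on $\mathrm{supp}(vv^*)$ (since $\tau_w = \tau_v$ by construction) and therefore commutes with $C_\varphi$, whence by masa-maximality $vw^* \in C_\varphi$. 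Uniqueness of $w$ is clear because any partial permutation matrix implementing $\tau_v$ has its $\{0,1\}$-entries pinned by $\sigma$.

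The main obstacle is unambiguously the cylindrical rigidity claim; everything after it---the triangularity of $w$ via the $\preceq$ order, and the construction and uniqueness of $d$---is routine. The rigidity itself should come from balancing the topological finiteness of $X$ (clopen sets are determined at finite level) with the operator-algebraic density of $\bigcup_m \cT_{k_m}$ in $\cT_\varphi$, using the continuity of $\tau_v$ as the bridge.
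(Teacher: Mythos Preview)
The paper does not supply its own proof of this lemma; it is quoted from Power's monograph without argument, so there is nothing in the paper to compare against directly. Your overall architecture is the standard one, and everything you do \emph{after} obtaining cylindrical rigidity is correct: the triangularity of $w$ via the $\preceq$ order, the definition $d = vw^* + (1-vv^*)$, the verification that $vw^*$ commutes with $C_\varphi$ and hence lies in it by maximality, and the uniqueness claim are all fine.

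The gap is exactly where you place it, but the tool you propose does not close it. Uniform continuity of $\tau_v$ and $\tau_v^{-1}$ is automatic on a compact domain and by itself does \emph{not} yield cylindrical rigidity: there are homeomorphisms of the Cantor set which at no finite level carry cylinders to cylinders. For instance, on $\{0,1\}^{\mathbb N}$ let $\tau$ swap coordinates $n+2$ and $n+3$ on each cylinder $[1^n0]$ and fix the point $(1,1,1,\ldots)$; then at every level $M$ the image of the cylinder $[1^{M-2}00]$ is $[1^{M-2}000]\cup[1^{M-2}010]$, which is not an $M$-cylinder. Such a $\tau$ never arises as $\tau_v$ for a normalizer $v$ in the UHF algebra, and establishing \emph{that} is precisely the content of the step---so you cannot appeal only to properties of $\tau_v$.

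What works is to use the approximation of $v$ itself rather than of $\tau_v$. Fix any $\epsilon<3-2\sqrt{2}$ and choose $m\geq N$ and $a=\sum a_{ij}e_{i,j}^{k_m}\in\cT_{k_m}$ with $\|v-a\|<\epsilon$. Since $v$ normalizes $C_\varphi$, each $e_i^{k_m} v e_j^{k_m}$ is a partial isometry (its source projection is $(v^*e_i^{k_m}v)e_j^{k_m}\in C_\varphi$), hence has norm $0$ or $1$; but this norm is within $\epsilon$ of $|a_{ij}|$. Thus every $|a_{ij}|$ lies within $\epsilon$ of $\{0,1\}$. Combined with $\sum_i|a_{ij}|^2=\|ae_j^{k_m}\|^2\in((1-\epsilon)^2,(1+\epsilon)^2)$ for each $e_j^{k_m}\leq v^*v$, one sees that exactly one index $i=\sigma(j)$ has $\|e_i^{k_m}ve_j^{k_m}\|=1$ and the rest vanish (two large entries overshoot the column norm; zero large entries force $ve_j^{k_m}=0$). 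Hence $ve_j^{k_m}=e_{\sigma(j)}^{k_m}ve_j^{k_m}$, giving $ve_j^{k_m}v^*=e_{\sigma(j)}^{k_m}$ directly. This is the missing bridge; with it your remaining steps go through unchanged.
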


%%%%%%%%%%%%%%%%%%%%%%%%%%%%%%%%%%%%%%%%%%%%%%%
%%%%%%%%%%%%%%%%%%%%%%%%%%%%%%%%%%%%%%%%%%%%%%%
%%%%%%%%%%%%%%%%%%%%%%%%%%%%%%%%%%%%%%%%%%%%%%%

\section{Isometric automorphisms}

Let $\cT_\varphi$ be a TUHF algebra and $\Aut(\cT_\varphi)$ denote the isometric automorphism group. %[Say something about why it is meaningful to only talk about the isometric ones.] 
Such an automorphism will preserve the masa, the partial order on projections and the normalizer.

\begin{theorem}[ \cite{Power}, Theorem 7.5 ] \label{PowerThm}
Let $C_\varphi \subset \cT_\varphi \subset \mathfrak A_\varphi$ and $C_\psi \subset \cT_\psi \subset \mathfrak A_\psi$ be the algebras defined for two sequences of embeddings $\varphi$ and $\psi$. Then the following are equivalent:
\begin{enumerate}
\item There is an isometric isomorphism $\theta : \cT_\varphi \rightarrow \cT_\psi$ with $\theta(C_\varphi) = C_\psi$.
\item The topological binary relations $R(\cT_\varphi)$ and $R(\cT_\psi)$ are isomorphic as topological relations.
\item There is a $*$-isomorphism $\tilde\theta : \mathfrak A_\varphi \rightarrow \mathfrak A_\psi$ with $\tilde\theta(\cT_\varphi) = \cT_\psi$ and $\tilde\theta(C_\varphi) = C_\psi$.
\end{enumerate}
\end{theorem}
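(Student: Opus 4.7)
The plan is to prove $(3) \Rightarrow (1) \Rightarrow (2) \Rightarrow (3)$. The implication $(3) \Rightarrow (1)$ is immediate: a $*$-isomorphism is in particular isometric, and by hypothesis $\tilde\theta(\cT_\varphi) = \cT_\psi$ and $\tilde\theta(C_\varphi) = C_\psi$, so $\theta := \tilde\theta|_{\cT_\varphi}$ does the job.

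For $(1) \Rightarrow (2)$, the restriction $\theta|_{C_\varphi} : C_\varphi \to C_\psi$ is a $*$-isomorphism of commutative C$^*$-algebras, inducing by Gelfand duality a homeomorphism $h : M(C_\psi) \to M(C_\varphi)$ of spectra. To see that $h$ identifies $R(\cT_\psi)$ with $R(\cT_\varphi)$ as topological binary relations, one observes that the normalizer $N_{C_\varphi}(\cT_\varphi)$ is defined purely in terms of $\cT_\varphi$ and $C_\varphi$, so $\theta$ maps it onto $N_{C_\psi}(\cT_\psi)$. By Lemma \ref{normalizer} every such element factors uniquely as a diagonal unitary times a partial permutation matrix from some finite level, and each partial permutation $e_{i,j}^{k_n} \in \cT_\varphi$ corresponds to exactly the clopen piece $E_{ij}^{k_n}$ of $R(\cT_\varphi)$. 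Thus $\theta$ produces a bijection of the generating clopen pieces compatible with $h$, and the continuity of $h^{\pm 1}$ together with the fact that the basic clopens on the unit space are precisely the spectra of diagonal projections (which $\theta$ preserves) yields the desired homeomorphism of relations.

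The serious work is in $(2) \Rightarrow (3)$. Given a topological-relation isomorphism $g : R(\cT_\varphi) \to R(\cT_\psi)$, its restriction to the unit space is a homeomorphism $h$ of the Gelfand spectra, which pulls back to a $*$-isomorphism $C_\varphi \to C_\psi$. To extend to the whole UHF algebra I would proceed level by level. For each level $n$, the matrix units $e_{i,j}^{k_n}$ generating the full matrix algebra $M_{k_n} \subset \mathfrak A_\varphi$ correspond to their graphs in the full equivalence relation on $M(C_\varphi)$; those graphs lying in $R(\cT_\varphi)$ are exactly the generators of $\cT_{k_n}$. Since $g$ is a homeomorphism of clopen sets and the basic clopen topology has a level-$m$ refinement base, compactness of each $E_{ij}^{k_n}$ forces its image $g(E_{ij}^{k_n})$ to decompose as a finite disjoint union of level-$m$ pieces of $R(\cT_\psi)$ for some uniform $m$. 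This is the data needed to define a $*$-homomorphism $M_{k_n} \to M_{k_m}$; compatibility with groupoid composition (forced by $g$ being a relation isomorphism) makes it multiplicative, and compatibility with the flip $(x,y) \mapsto (y,x)$ makes it $*$-preserving. Consistency across $n$ assembles into $\tilde\theta : \mathfrak A_\varphi \to \mathfrak A_\psi$, and by construction $\tilde\theta(\cT_\varphi) = \cT_\psi$.

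The main obstacle is the refinement step in $(2) \Rightarrow (3)$: one must show that $g$ sends each level-$n$ clopen piece $E_{ij}^{k_n}$ of $R(\cT_\varphi)$ uniformly into finitely many level-$m$ pieces of $R(\cT_\psi)$, so that the map from $M_{k_n}$ lands in a single matrix subalgebra $M_{k_m}$ of $\mathfrak A_\psi$. This is essentially a compactness argument, but getting a uniform $m$ as $(i,j)$ ranges over a level requires care, and it is what ultimately forces the constructed map to respect the AF filtration.
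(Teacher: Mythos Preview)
The paper does not prove this theorem at all: it is stated with attribution to \cite[Theorem~7.5]{Power} and used as a black box throughout. So there is no ``paper's own proof'' to compare against; your proposal is an attempt to reproduce Power's original argument.

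Your outline is essentially the standard groupoid/spectral-relation proof one finds in Power's book, and the architecture $(3)\Rightarrow(1)\Rightarrow(2)\Rightarrow(3)$ is correct. Two remarks on execution. First, in $(1)\Rightarrow(2)$ you invoke Lemma~\ref{normalizer}, but that lemma is stated only under the hypothesis of regular embeddings; you should either note that one may reduce to that case (and be careful not to invoke Proposition~3.3 of this paper, which itself depends on Theorem~\ref{PowerThm}), or argue directly that $\theta$ carries matrix units to $C_\psi$-normalizing partial isometries whose supports are clopen in $R(\cT_\psi)$. Also, $\theta(e_{i,j}^{k_n})$ need not be a single matrix unit on the $\psi$ side---it is a finite sum of them---so what you actually get is that each $E_{ij}^{k_n}$ maps to a finite union of basic pieces, not a ``bijection of the generating clopen pieces''. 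Second, the ``main obstacle'' you flag in $(2)\Rightarrow(3)$---obtaining a uniform $m$ for all $(i,j)$ at level $n$---is milder than you suggest: there are only finitely many such pairs, so one simply takes the maximum of the individually obtained $m$'s. The genuine care is in checking that the resulting assignment of matrix units is multiplicative and $*$-preserving, which you correctly attribute to $g$ respecting composition and the flip.
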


Furthermore, by  \cite[Corollary IV.5.8]{Dav} all automorphisms of $\mathfrak A_\varphi$ are approximately inner, i.e. the pointwise limit of inner automorphisms. Hence, by the previous theorem the automorphisms in $\Aut(\cT_\varphi)$ are just restrictions of approximately inner automorphisms. Consider now, that the only unitaries in $\cT_\varphi$ live in the masa, that is $U(\cT_\varphi) = U(C_\varphi)$. Since we refer to $C_\varphi$ as the diagonal of $\cT_\varphi$ this leads us to the following definition:

\begin{definition}
An approximately inner (or just inner) automorphism of $\cT_\varphi$ is called a {\em approximately diagonal} automorphism. We denote this group by $\overline{\Inn}(\cT_\varphi)$. More specifically, $\gamma\in \overline{\Inn}(\cT_\varphi)$ if there exists $U_n \in U(C_\varphi)$ such that 
\[
\lim_{n\rightarrow\infty} U_nAU_n^* = \gamma(A), \ \ \forall A\in \cT_\varphi.
\]
\end{definition}

Now because $U(C_\varphi)$ is commutative we immediately get that $\overline{\Inn}(\cT_\varphi)$ is commutative as well.

Define as well the outer automorphism group:
\[
\Out(\cT_\varphi) := \Aut(\cT_\varphi)/\overline{\Inn}(\cT_\varphi). 
\]

\begin{proposition}
$\cT_\varphi$ is always isometrically isomorphic to a triangular UHF algebra with regular embeddings.
\end{proposition}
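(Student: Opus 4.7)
The idea is to keep the diagonal action of each $\varphi_n$ fixed, replace the off-diagonal matrix units by partial permutation matrices, and then deduce the isomorphism from Theorem~\ref{PowerThm}.

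First, I would extract from each $\varphi_n$ the partition $\pi_n : [k_{n+1}] \to [k_n]$ describing the diagonal action, so that $\varphi_n(e_i^{k_n}) = \sum_{j \in \pi_n^{-1}(i)} e_j^{k_{n+1}}$; unitality forces each fibre to have size $m := k_{n+1}/k_n$. For each $1 \le i < k_n$, the image $\varphi_n(e_{i,i+1}^{k_n})$ restricts to a unitary $m \times m$ matrix $U$ on the sub-block indexed by $\pi_n^{-1}(i) \times \pi_n^{-1}(i+1)$, with $U_{j,j'} = 0$ whenever $j > j'$ since $\varphi_n(\cT_{k_n}) \subseteq \cT_{k_{n+1}}$ and the two fibres are disjoint.

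The crux of the argument is extracting from $U$ a bijection $\sigma_i : \pi_n^{-1}(i+1) \to \pi_n^{-1}(i)$ with $\sigma_i(j') < j'$ for every $j'$; this is the step where I expect the real work. The approach I have in mind uses the Leibniz expansion: since $U$ is unitary, $\det U \ne 0$, so some term of the expansion must be nonzero, which forces the existence of a permutation whose graph lies in the support of $U$, and by the zero pattern of $U$ this permutation is strictly sub-diagonal in the sense required for $\sigma_i$.

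Having fixed the $\sigma_i$, I would define $\tilde\varphi_n : \cT_{k_n} \to \cT_{k_{n+1}}$ by setting $\tilde\varphi_n(e_i^{k_n}) = \varphi_n(e_i^{k_n})$ and $\tilde\varphi_n(e_{i,i+1}^{k_n}) = \sum_{j' \in \pi_n^{-1}(i+1)} e_{\sigma_i(j'),j'}^{k_{n+1}}$, extending to general $\tilde\varphi_n(e_{i,i'}^{k_n})$ for $i < i'$ by composing along the chain $\sigma_i \circ \sigma_{i+1} \circ \cdots \circ \sigma_{i'-1}$ (and defining it on the lower-triangular matrix units by adjoints). A routine matrix-unit calculation shows that $\tilde\varphi_n$ is a unital $*$-embedding of $M_{k_n}$ into $M_{k_{n+1}}$ that carries $\cT_{k_n}$ into $\cT_{k_{n+1}}$ and sends every partial permutation matrix to a partial permutation matrix, so $\tilde\varphi_n$ is a regular embedding.

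To close the argument I would note that since $\varphi_n$ and $\tilde\varphi_n$ induce the same partition $\pi_n$ on the diagonal for every $n$, the masas $C_\varphi$ and $C_{\tilde\varphi}$ have canonically identified Gelfand spaces, and the sets $E_{ij}^{k_n}$ defining the topological binary relations $R(\cT_\varphi)$ and $R(\cT_{\tilde\varphi})$ coincide under this identification. Theorem~\ref{PowerThm} then yields an isometric isomorphism $\cT_\varphi \cong \cT_{\tilde\varphi}$, and $\cT_{\tilde\varphi}$ is a TUHF algebra with regular embeddings by construction.
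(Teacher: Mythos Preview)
Your proof is correct and follows the same strategy as the paper: keep the diagonal action of each $\varphi_n$, replace the off-diagonal matrix units by partial permutations, and invoke Theorem~\ref{PowerThm} via the equality of the topological binary relations. The only difference is in producing the upper-triangular bijection between fibres: the paper simply pairs the $m$-th smallest element of $\pi_n^{-1}(i)$ with the $m$-th smallest element of $\pi_n^{-1}(j)$ (relying on $\varphi_n(e_i^{k_n}) \preceq \varphi_n(e_j^{k_n})$ in $\cT_{k_{n+1}}$ to guarantee $i_m \leq j_m$), whereas your Leibniz-expansion argument yields a less canonical but more explicitly justified choice and then composes along chains.
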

\begin{proof}
For each $n\geq 1$ define a new function $\psi_n:\cT_{k_n} \rightarrow \cT_{k_{n+1}}$ by first defining $\psi_n(e_i^{k_n}) = \varphi_n(e_i^{k_n})$ and then defining $\psi_n(e_{i,j}^{k_n})$ in the best possible way. In particular, if $e_i^{k_n} \preceq e_j^{k_n}$ then 
\[
\psi_n(e_i^{k_n}) = \sum_{m=1}^{k'/k} e_{i_m}^{k_{n+1}} \preceq \psi_n(e_j^{k_n}) = \sum_{m=1}^{k'/k} e_{j_m}^{k_{n+1}} 
\]
with $i_m \leq i_{m+1}, j_m \leq j_{m+1}$ and $i_m \leq j_m$ and so define 
$\psi_n(e_{i,j}^{k_n}) = \sum_{m=1}^{k'/k} e_{i_m, j_m}^{k_{n+1}}$. Hence, $\psi_n$ is a regular embedding since it takes the partial permutation matrices of $\cT_{k_n}$ to partial permutations in $\cT_{k_{n+1}}$.

Thus, $C_\varphi = C_\psi$ with the even stronger condition that $R(\cT_\varphi) = R(\cT_\psi)$ since this is all determined by the partial order ``$\preceq$" which is unchanged using the $\psi$ embeddings.
Therefore, by Theorem \ref{PowerThm} $\cT_\varphi \simeq \cT_\psi$.
\end{proof}

\begin{lemma}\label{unclosed}
Let $\theta\in \Aut(\cT_\varphi)$. Then there exists $\gamma\in \overline{\Inn}(\cT_\varphi)$ such that 
\[
\gamma\circ\theta(\cup_{n\geq 1} \cT_{k_n}) = \cup_{n\geq 1} \cT_{k_n}.
\]
\end{lemma}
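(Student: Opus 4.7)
The strategy is to analyze $\theta$ on each matrix unit via the normalizer decomposition of Lemma~\ref{normalizer}, then build the desired $\gamma \in \overline{\Inn}(\cT_\varphi)$ through an iterative (ping-pong) intertwining.

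By Theorem~\ref{PowerThm}, $\theta$ preserves $C_\varphi$ and hence the normalizer $N_{C_\varphi}(\cT_\varphi)$. Applying Lemma~\ref{normalizer} to each $\theta(e_{i,j}^{k_n})$ yields a unique decomposition $\theta(e_{i,j}^{k_n}) = d_{i,j}^{(n)} w_{i,j}^{(n)}$ with $d_{i,j}^{(n)} \in U(C_\varphi)$ and $w_{i,j}^{(n)}$ a partial permutation matrix in $\cT_{k_{M(n)}}$ (taking $M(n)$ to be the maximum of the levels returned over all $i,j$). Multiplicativity of $\theta$ combined with the uniqueness clause of that lemma forces the $w_{i,j}^{(n)}$ themselves to be a system of matrix units inside $\cT_{k_{M(n)}}$, with $w_{i,i}^{(n)} = \theta(e_i^{k_n})$; these diagonal projections automatically lie in $C_{M(n)}$ since every projection in $C_\varphi \simeq C(X)$ is a clopen of the Cantor space $X$ and therefore of finite level. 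Moreover, for any fixed $n$, the two matrix-unit systems $\{\theta(e_{i,j}^{k_n})\}$ and $\{w_{i,j}^{(n)}\}$ share the same diagonal, so a standard argument produces a unitary $V_n \in U(C_\varphi)$ (explicitly, $V_n = \sum_i d_{i,1}^{(n)}\theta(e_i^{k_n}) + (1 - \theta(1))$ works) such that $V_n^* \theta(\cT_{k_n}) V_n \subset \cT_{k_{M(n)}}$, settling one level at a time.

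The heart of the proof is to handle all levels simultaneously and to obtain equality rather than mere containment. I would construct $U^{(n)} \in U(C_\varphi)$ inductively so that, writing $\beta_n = \operatorname{Ad}((U^{(n)})^*)\circ\theta$, both $\beta_n(\cT_{k_j}) \subset \cT_{k_{M_j}}$ and $\beta_n^{-1}(\cT_{k_j}) \subset \cT_{k_{L_j}}$ hold for all $j \leq n$, with $\|U^{(n+1)} - U^{(n)}\| \leq 2^{-n}$. At each stage one alternates (ping-pong) between correcting $\theta$ and $\theta^{-1}$, choosing the new diagonal correction $U^{(n+1)}(U^{(n)})^{-1}$ inside the commutant, taken inside $C_\varphi$, of the previously matched finite-level algebra $\cT_{k_{\max(M_n,L_n)}}$. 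Via Gelfand, this commutant consists of the continuous $S^1$-valued functions on $X$ that are constant along the $R(\cT_\varphi)$-equivalence classes at that level, and remains rich enough to absorb the next-level obstruction because the new indices $M_{n+1}, L_{n+1}$ may be taken arbitrarily large. Setting $U = \lim U^{(n)}$, the automorphism $\gamma = \operatorname{Ad}(U^*)$ lies in $\overline{\Inn}(\cT_\varphi)$ and satisfies $\gamma\theta(\cT_{k_j}) \subset \cT_{k_{M_j}}$ together with $(\gamma\theta)^{-1}(\cT_{k_j}) \subset \cT_{k_{L_j}}$ for every $j$, giving the set equality $\gamma\theta(\bigcup_n \cT_{k_n}) = \bigcup_n \cT_{k_n}$.

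The main obstacle is this simultaneous intertwining: one needs (i) enough freedom in the commutant of the current finite-level algebra inside $C_\varphi$ to kill the obstruction at the next level, and (ii) the correction to stay within $2^{-n}$ of the identity so that $U^{(n)}$ converges in norm. Verifying (i) is a compatibility statement about how the diagonal obstructions $d_{i,j}^{(n+1)}$ interact with the equivariance already imposed by the preceding match, and is the technical heart of the argument. The ping-pong between $\theta$ and $\theta^{-1}$ is what upgrades the one-sided containment $\gamma\theta(\bigcup_n \cT_{k_n}) \subset \bigcup_n \cT_{k_n}$ into the equality asserted in the statement.
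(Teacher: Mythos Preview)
Your single-level step is essentially the paper's: decompose $\theta(e_{i,i+1}^{k_n})=d_i w_i$ via Lemma~\ref{normalizer} and conjugate by a diagonal unitary to land $\theta(\cT_{k_n})$ inside some $\cT_{k_{n'}}$. (The paper builds the conjugating unitary recursively from the $d_i$'s on the superdiagonal rather than via your formula $V_n=\sum_i d_{i,1}^{(n)}\theta(e_i^{k_n})$, which as written needs the relations among the $d_{i,j}^{(n)}$ spelled out; but this is cosmetic.) Your ping-pong between $\theta$ and $\theta^{-1}$ is a genuine variation: the paper runs only the one-sided iteration $U_m^*\theta(\cT_{k_{n_{m-1}}})U_m\subset\cT_{k_{n_m}}$ and asserts equality at the end, whereas your two-sided intertwining makes the set equality $\gamma\theta(\bigcup\cT_{k_n})=\bigcup\cT_{k_n}$ explicit. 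That is a reasonable and arguably cleaner route to the stated conclusion.

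The real gap is your convergence claim. You require $\|U^{(n+1)}-U^{(n)}\|\le 2^{-n}$ so that $U=\lim U^{(n)}$ exists in norm and $\gamma=\operatorname{Ad}(U^*)$ is \emph{inner}. There is no mechanism for this: the correction $U^{(n+1)}(U^{(n)})^{-1}$ is a diagonal unitary chosen to absorb the phases $d_{i,j}^{(n+1)}$, and nothing forces those phases to be close to $1$. You flag this as obstacle~(ii) but do not address it, and in general it cannot be arranged. Fortunately it is also unnecessary. What your commutant condition already gives is that $\operatorname{Ad}((U^{(n)})^*)$ stabilises on each $\theta(\cT_{k_j})$ once $n$ is large, hence $\operatorname{Ad}((U^{(n)})^*)$ converges \emph{pointwise} on a dense subalgebra to an automorphism $\gamma$. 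This is precisely the defining convergence for $\gamma\in\overline{\Inn}(\cT_\varphi)$, and it is exactly what the paper uses: the sequence $U_m$ need not converge in norm, but $U_m a U_m^*$ converges for every $a$. Drop the norm-Cauchy requirement, keep your commutant condition, and the argument goes through.
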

\begin{proof}
Assume without loss of generality that $\cT_\varphi$ has regular embeddings, since $\overline{\Inn}(\cT_\varphi)$ is isomorphism invariant. Let $n_1\geq 1$ be big enough such that $\theta(\proj(\cT_{k_1})) \subset \proj(\cT_{k_{n_1}})$ and using Lemma \ref{normalizer}, $\theta(e_{i,i+1}^{k_1}) = d_i w_i \in N_{C_\varphi}(\cT_\varphi)$ with $d_i \in U(C_\varphi)$ and $w_i\in N_{C_{n_1}}(\cT_{k_{n_1}})$, a partial permutation matrix, for $1\leq i < k_1$.

Set $u_1 = I \in C_\varphi$ and $u_2 \in U(C_\varphi)$ such that $u_2 = w_1^*d_1^*w_1$. Now, recursively define $u_i\in U(C_\varphi)$ by
\[
u_i = w_{i-1}^*d_{i-1}^*u_{i-1}w_{i-1}, \ \ {\rm for} \ \ 2 < i \leq k_1.
\]
Set $U_1 = \sum_{i=1}^{k_1} \theta(e_{i}^{k_1}) u_i \in U(C_\varphi)$ and notice that
\[
U_1^*\theta(e_{i,i+1}^{k_1})U_1  =  u_i^*\theta(e_{i,i+1}^{k_1})u_{i+1} = u_i^*(d_iw_i)u_{i+1} = w_i \in \cT_{k_{n_1}}.
\]
Thus, $U_1^*\theta(\cT_{k_1})U_1 \subset \cT_{k_{n_1}}$.

In the same way there exists $n_2\geq n_1$ and $U_2\in U(C_\varphi)$ such that $U_2^*\theta(\cT_{k_{n_1}})U_2 \subset \cT_{k_{n_2}}$.
Since the following are both regular embeddings they must be equal: 
\[
U_2^*\theta(\varphi_{k_{n_1}-1}\circ\cdots\circ\varphi_1(\cT_{k_1}))U_2 = \varphi_{k_{n_2}-1}\circ\cdots\circ\varphi_{k_{n_1}}(U_1^*\theta(\cT_{k_1})U_1).
\]
Repeating this we recursively get $n_{m+1} \geq n_m$ and $U_{m+1} \in U(C_\varphi)$ such that $U_{m+1}^*\theta(\cT_{k_{n_m}})U_{m+1} \subset \cT_{k_{n+1}}$ with $U_{m+1}U_m^*|_{\theta(\cT_{k_m})} = I$.

Therefore, the sequence $U_m$ defines an approximately inner automorphism $\gamma\in \overline{\Inn}(\cT_\varphi)$ and $\gamma\circ\theta(\cup_{n\geq1} \cT_{k_n}) = \cup_{n\geq 1} \cT_{k_n}$. Furthermore, for every $n\geq 1$, $\gamma\circ\theta|_{\cT_{k_n}}$ is a regular embedding into some $\cT_{k_{n'}}$. 
\end{proof}

\begin{proposition}\label{diagAuto}
Let $\theta\in \Aut(\cT_\varphi)$ and $\theta(p) = p,$ for all $p\in {\rm Proj}(\cT_\varphi)$. Then $\theta$ is an approximately diagonal automorphism.
\end{proposition}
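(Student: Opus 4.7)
The plan is to reduce to a combinatorial rigidity statement: after post-composing by a suitable approximately diagonal automorphism, $\theta$ should send each matrix unit $e_{i,j}^{k_n}$ to a unimodular scalar multiple of its canonical image in a later level. Once that rigidity is in hand, constructing the approximating diagonal unitaries will be a routine cocycle calculation.

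First, I apply Lemma \ref{unclosed} to replace $\theta$ by $\gamma \circ \theta$ for some $\gamma \in \overline{\Inn}(\cT_\varphi)$, so that $\bigcup_n \cT_{k_n}$ is carried onto itself and each restriction $\theta|_{\cT_{k_n}} \colon \cT_{k_n} \to \cT_{k_{n'}}$ is a regular embedding. Since every projection of $\cT_\varphi$ lies in the abelian algebra $C_\varphi$ and $\gamma$ is a pointwise limit of conjugations by diagonal unitaries, $\gamma$ already fixes all projections, so $\gamma \circ \theta$ still does; because $\overline{\Inn}(\cT_\varphi)$ is a group, it suffices to prove the new $\theta$ is approximately diagonal.

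The key step is to pin down the partial permutation part of $\theta(e_{i,j}^{k_n})$. Apply Lemma \ref{normalizer} to write $\theta(e_{i,j}^{k_n}) = d \cdot w$ uniquely, with $d \in U(C_\varphi)$ and $w$ a partial permutation matrix in some $\cT_{k_{n'}}$; since $d$ commutes with diagonal projections, $ww^* = e_i^{k_n}$ and $w^*w = e_j^{k_n}$. I claim $w$ coincides with the image of $e_{i,j}^{k_n}$ under the canonical embedding $\cT_{k_n} \hookrightarrow \cT_{k_{n'}}$, namely $\sum_{\beta \in S_j} e_{\tau_{ij}(\beta),\beta}^{k_{n'}}$, where $S_j$ indexes the minimal projections of $\cT_{k_{n'}}$ below $e_j^{k_n}$ and $\tau_{ij} \colon S_j \to S_i$ is the column-to-row assignment dictated by the embedding. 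To prove this, fix $\beta \in S_j$ and compute $\theta(e_{i,j}^{k_n}) \cdot e_\beta^{k_{n'}}$ in two ways: on the one hand it equals $\theta(e_{i,j}^{k_n}\, e_\beta^{k_{n'}}) = \theta(e_{\tau_{ij}(\beta),\beta}^{k_{n'}})$, a partial isometry whose range is the fixed projection $e_{\tau_{ij}(\beta)}^{k_{n'}}$; on the other hand it is the single surviving column of $dw$, whose range is determined by the permutation underlying $w$. Comparing ranges forces that permutation to equal $\tau_{ij}$. This rigidity step — which rules out the "block shuffle" automorphisms that fix every projection but rearrange matrix units within blocks — is what I expect to be the main obstacle.

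With $\theta(e_{i,j}^{k_n}) = \sum_{\beta \in S_j} \lambda_\beta^{ij,n}\, e_{\tau_{ij}(\beta),\beta}^{k_{n'}}$ now established for unimodular scalars $\lambda_\beta^{ij,n}$, I construct a diagonal unitary $U_n \in U(C_\varphi)$, supported at level $k_{n'}$, by solving the system $u_{\tau_{ij}(\beta)}\, \overline{u_\beta} = \lambda_\beta^{ij,n}$. The multiplicative relations $\theta(e_{i,j}^{k_n}) \theta(e_{j,l}^{k_n}) = \theta(e_{i,l}^{k_n})$ force a cocycle identity among the $\lambda_\beta^{ij,n}$ which makes this system consistent: normalize $u_\ell = 1$ for $\ell \in S_1$, set $u_\beta = \overline{\lambda_\beta^{1j,n}}$ for $\beta \in S_j$, and verify the remaining equations via the cocycle. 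By construction $U_n A U_n^* = \theta(A)$ for all $A \in \cT_{k_n}$, and a standard three-term triangle inequality using approximations $A_n \in \cT_{k_n}$ of $A \in \cT_\varphi$ yields $U_n A U_n^* \to \theta(A)$ for every $A$, placing $\theta$ in $\overline{\Inn}(\cT_\varphi)$.
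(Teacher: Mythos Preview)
Your proof is correct and follows the same strategy as the paper: apply Lemma \ref{unclosed}, then establish rigidity for the matrix units by comparing $\theta(e_{i,j}^{k_n})$ against the images $\theta(e_{\tau_{ij}(\beta),\beta}^{k_{n'}})$ (the paper writes this as $\tilde\theta(e_{i,j}^{k_n}) = \sum_l \tilde\theta(e_{i_l,j_l}^{k_{n'}})$, which is the same computation). The paper's argument is a little shorter because the ``regular embedding'' conclusion of Lemma \ref{unclosed} already forces $\theta(e_{i,j}^{k_n})$ to be a genuine partial permutation matrix with no unimodular scalars, so once the rigidity step pins down the permutation one has $\gamma\circ\theta = \id$ outright and your cocycle construction of the $U_n$ is unnecessary (though correct).
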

\begin{proof}
Assume that $\cT_\varphi$ has regular embeddings. By the previous Lemma there exists $\gamma\in \overline{\Inn}(\cT_\varphi)$ such that $\tilde\theta := \gamma\circ\theta$ preserves the unclosed union and from the end of the proof we may assume that $\tilde\theta|_{\cT_{k_n}}$ is a regular embedding into $\cT_{k_{n'}}$.

Hence, for $1\leq i < j \leq k_n$, 
\[
\varphi_{n'-1}\circ\cdots\circ\varphi_{n}(e_{i,j}^{k_n}) = \sum_{l=1}^{k_{n'}/k_n} e_{i_l,j_l}^{k_{n'}}.
\]
and so 
\[
\sum_{l=1}^{k_{n'}/k_n} e_{i_l}\tilde\theta(e_{i_l, j_l}^{k_{n'}}) e_{j_l} = \tilde\theta( \sum_{l=1}^{k_{n'}/k_n} e_{i_l,j_l}^{k_{n'}} ) =  \tilde\theta(e_{i,j}^{k_n}) \in \cT_{k_{n'}}
\]
because $\tilde\theta(p) = p$ for all projections $p$. However, $\tilde\theta|_{\cT_{k_n}}$ is a regular embedding so there is no other option than to have $\tilde\theta(e_{i_l,j_l}^{k_{n'}}) = e_{i_l,j_l}^{k_{n'}}$ and so $\tilde\theta(e_{i,j}^{k_n}) = \varphi_{n'-1}\circ\cdots\circ\varphi_n(e_{i,j}^{k_n})$. 

Therefore, $\tilde\theta = \id$ and so $\theta = \gamma^{-1} \in \overline{\Inn}(\cT_\varphi)$.
\end{proof}

%%%%%%%%%%%%%%%%

\begin{theorem}\label{semidirect}
$\Aut(\cT_\varphi) \simeq \overline{\Inn}(\cT_\varphi) \rtimes \Out(\cT_\varphi)$.
\end{theorem}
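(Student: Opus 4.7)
My plan is to produce a subgroup $G\le\Aut(\cT_\varphi)$ complementary to $\overline{\Inn}(\cT_\varphi)$---that is, with $G\cap\overline{\Inn}(\cT_\varphi)=\{\id\}$ and $\overline{\Inn}(\cT_\varphi)\cdot G=\Aut(\cT_\varphi)$---and to verify that $\overline{\Inn}(\cT_\varphi)$ is normal. The restriction of the quotient map to $G$ will then give an isomorphism $G\cong\Out(\cT_\varphi)$, and the semidirect decomposition will follow. Normality is automatic from the fact already recorded at the start of this section that every $\theta\in\Aut(\cT_\varphi)$ preserves the diagonal $C_\varphi$: if $\gamma=\lim\ad_{U_n}$ with $U_n\in U(C_\varphi)$, then $\theta\gamma\theta^{-1}=\lim\ad_{\theta(U_n)}$ and $\theta(U_n)\in U(C_\varphi)$, so the conjugate lies in $\overline{\Inn}(\cT_\varphi)$.

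Let $G$ denote the set of \emph{strong regular} automorphisms, meaning those $\theta\in\Aut(\cT_\varphi)$ whose restriction $\theta|_{\cT_{k_n}}$ is a regular embedding into some $\cT_{k_{n'}}$ for every $n$. This is exactly the normal form extracted at the end of the proof of Lemma \ref{unclosed}, which therefore already delivers $\overline{\Inn}(\cT_\varphi)\cdot G=\Aut(\cT_\varphi)$. Closure of $G$ under composition is immediate since composing two regular embeddings yields a regular embedding. For the trivial intersection, any $\theta\in\overline{\Inn}(\cT_\varphi)$ fixes $C_\varphi$ pointwise---the $U_n$ and every projection of $\cT_\varphi$ both lie in the abelian $C_\varphi$---and the matrix-unit analysis in the proof of Proposition \ref{diagAuto} then forces a strong regular automorphism that fixes all projections to be the identity on each matrix unit. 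Hence $G\cap\overline{\Inn}(\cT_\varphi)=\{\id\}$.

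The key step is closure of $G$ under inverses. Given $\theta\in G$, apply Lemma \ref{unclosed} to $\theta^{-1}$ to obtain $\gamma\in\overline{\Inn}(\cT_\varphi)$ with $\gamma\circ\theta^{-1}\in G$. Then
\[
\gamma \;=\; (\gamma\circ\theta^{-1})\circ\theta
\]
is a composition of two elements of $G$, hence $\gamma\in G$. Combined with $\gamma\in\overline{\Inn}(\cT_\varphi)$ and the trivial intersection just established, this forces $\gamma=\id$, so $\theta^{-1}=\gamma\circ\theta^{-1}\in G$.

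The main obstacle in the whole argument is precisely this inverse-closure bootstrapping: it is not evident \emph{a priori} that $\theta^{-1}$ inherits strong regularity from $\theta$ (a regular embedding $\cT_{k_n}\to\cT_{k_{n'}}$ need not be surjective onto its target level, so one cannot simply invert level by level), but Lemma \ref{unclosed} supplies an approximately inner correction $\gamma$ which the rigidity of $G$ then eliminates. Combining everything gives $\Aut(\cT_\varphi)=\overline{\Inn}(\cT_\varphi)\rtimes G\cong\overline{\Inn}(\cT_\varphi)\rtimes\Out(\cT_\varphi)$.
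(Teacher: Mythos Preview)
Your proof is correct and follows the same overall strategy as the paper: both single out the subgroup $G=\cO$ of automorphisms acting as regular embeddings at each finite level, use Lemma~\ref{unclosed} for the factorization $\Aut=\overline{\Inn}\cdot G$, and extract from the proof of Proposition~\ref{diagAuto} that a strong regular automorphism fixing all projections must be the identity (giving $G\cap\overline{\Inn}=\{\id\}$). The one substantive difference is the inverse-closure step. The paper argues directly that since $\theta\in\cO$ carries partial permutation matrices to partial permutation matrices, so must $\theta^{-1}$, whence $\theta^{-1}|_{\cT_{k_n}}$ is again a regular embedding. Your bootstrap---apply Lemma~\ref{unclosed} to $\theta^{-1}$ to obtain $\gamma\theta^{-1}\in G$, then observe $\gamma=(\gamma\theta^{-1})\theta\in G\cap\overline{\Inn}=\{\id\}$---is a tidy alternative that sidesteps any elementwise analysis of $\theta^{-1}$, at the cost of invoking Lemma~\ref{unclosed} a second time; the paper's route is shorter but relies on the bijectivity of $\theta$ on the set of partial permutation matrices. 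Your normality argument, conjugating the approximating diagonal unitaries directly via $\theta(U_n)\in U(C_\varphi)$, is likewise a minor variant of the paper's, which instead checks that $\theta^{-1}\gamma\theta$ fixes projections and then invokes Proposition~\ref{diagAuto}.
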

\begin{proof}
Let $\cT_\varphi$ have regular embeddings. Lemma \ref{unclosed} and Proposition \ref{diagAuto} tell us that there is a unique representative to each coset of $\Out(\cT_\varphi)$, denote the collections of these as $\cO \subset \Aut(\cT_\varphi)$. Thus, if $\theta\in \cO$ then it acts as a regular embedding at each finite level. Composition of regular embeddings gives a regular embedding so it is immediate that if $\theta, \tilde\theta\in \cO$ then $\theta\circ\tilde\theta \in \cO$. Finally, $\theta^{-1}$ must send partial permutation matrices to partial permutation matrices because $\theta\in \cO$. But then $\theta^{-1}|_{\cT_{k_n}}$ must be a regular embedding and so $\theta^{-1}\in\cO$ as well. Therefore, $\cO$ is a group and is isomorphic to $\Out(\cT_\varphi)$.

Furthermore, for $\theta \in \cO$ and $\gamma\in \overline{\Inn}(\cT_\varphi)$ we have that for any $p\in \proj(\cT_\varphi)$ 
\[
\theta^{-1}\circ\gamma\circ\theta(p) = \theta^{-1}(\theta(p)) = p
\]
because approximately diagonal automorphisms preserve projections. By Proposition \ref{diagAuto} this implies that $\theta^{-1}\circ\gamma\circ\theta\in \overline{\Inn}(\cT_\varphi)$, which gives an action of $\Out(\cT_\varphi)$ on $\overline{\Inn}(\cT_\varphi)$. Therefore the result follows.
\end{proof}

A set of totally ordered projections $e_1 \preceq \cdots \preceq e_n \in \cT_n$ when embedded into $\cT_m$ becomes a partition $A_1 \dot\cup \cdots \dot\cup A_n$ where $|A_i| = |A_j| = m/n$ and $A_i \leq A_{i+1}$ in the sense that the $j$th smallest element of $A_i$ is smaller than the $j$th smallest element of $A_{i+1}$. We will call $A$ an {\em ordered partition}.

Suppose we have two such ordered partitions $A = \dot\cup A_i$ and $B = \dot\cup B_i$ then we say $A \preceq B$ if for some $1\leq j\leq m$,
$j'\in A_i$ if and only if $j'\in B_i$ for all $1\leq j' < j$ and $j\in A_i, j\in B_{i'}$ with $i < i'$. In other words, the element where they differ occurs in an earlier set. Hence, this is a total order on ordered partitions of the same set.

\begin{lemma}
Let $A = \dot\cup_{i=1}^n A_i$ and $B = \dot\cup_{i=1}^n B_i$ be ordered partitions of $\{1,\cdots, m\}$ and suppose that $\varphi : \cT_m \rightarrow \cT_{m'}$ is a unital embedding. If $A\preceq B$ then $\varphi(A) \preceq \varphi(B)$.
\end{lemma}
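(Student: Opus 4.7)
The plan is to track how $\varphi$ acts at the level of minimal diagonal projections. Since every projection in $\cT_{m'}$ is selfadjoint (hence diagonal), each $\varphi(e_j^m)$ is a sum of minimal projections in $\cT_{m'}$, say $\varphi(e_j^m) = \sum_{l \in S_j} e_l^{m'}$. The relation $\preceq$ is preserved by any $\ast$-embedding, so the chain $e_1^m \preceq \cdots \preceq e_m^m$ maps to a corresponding chain, which is precisely the statement that $\{S_1, \ldots, S_m\}$ is itself an ordered partition of $\{1, \ldots, m'\}$. Writing $S_j = \{s_{j,1} < s_{j,2} < \cdots < s_{j,r}\}$ with $r = m'/m$, this amounts to the inequalities $s_{j,k} < s_{j+1,k}$ for all $j$ and $k$. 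The embedded partitions then have blocks $\varphi(A)_i = \bigcup_{j \in A_i} S_j$ and $\varphi(B)_i = \bigcup_{j \in B_i} S_j$.

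Next, for each $l \in \{1,\ldots,m'\}$ let $j(l)$ be the unique index with $l \in S_{j(l)}$, and note the tautology: $l$ lies in the $i$th block of $\varphi(A)$ precisely when $j(l) \in A_i$, and the same for $B$. Consequently $\varphi(A)$ and $\varphi(B)$ disagree at position $l$ if and only if $A$ and $B$ disagree at position $j(l)$. So to compare $\varphi(A)$ and $\varphi(B)$ I need only locate the smallest $l$ for which $j(l)$ is a disagreement index of the pair $(A,B)$.

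Let $j^*$ be the first index where $A$ and $B$ differ, with $j^* \in A_i$ and $j^* \in B_{i'}$, $i < i'$ (this is precisely what $A \preceq B$ gives). Put $l^* = s_{j^*, 1}$. For any $l < l^*$: the case $j(l) = j^*$ would force $l \geq s_{j^*,1} = l^*$, and $j(l) > j^*$ would force $l \geq s_{j(l),1} > s_{j^*,1} = l^*$ by the monotonicity above, both impossible. Therefore $j(l) < j^*$, where $A$ and $B$ agree, so $l$ lies in the same block of $\varphi(A)$ as of $\varphi(B)$. At $l = l^*$ itself, $j(l^*) = j^*$, hence $l^* \in \varphi(A)_i$ and $l^* \in \varphi(B)_{i'}$ with $i < i'$, establishing $\varphi(A) \preceq \varphi(B)$.

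The only real substance is the monotonicity $s_{j,1} < s_{j+1,1}$, which is what prevents a small index $l < l^*$ from concealing a disagreement at some later $j(l) > j^*$; this in turn is a direct consequence of $\varphi$ preserving $\preceq$ on projections. The rest is bookkeeping.
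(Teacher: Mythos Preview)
Your proof is correct and follows essentially the same route as the paper's: both locate the first disagreement index $j^*$ of $A$ and $B$, take $l^*$ to be the smallest index in the support of $\varphi(e_{j^*})$, and argue that any $l<l^*$ must have preimage $j(l)<j^*$ (you via the monotonicity $s_{j,1}<s_{j+1,1}$, the paper via $e_{j'}\preceq e_{j_1}$), so $\varphi(A)$ and $\varphi(B)$ first differ at $l^*$ with the correct orientation. Your version is more explicitly notated, but the content is identical.
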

\begin{proof}
Let $j\in A_i, j\in B_{i'}, i < i'$ be the first element that differs in the two partitions.
Consider the first elementary projection of $\varphi(e_j)\in \cT_{m'}$, say $e_{j_1} \leq \varphi(e_j)$ then $j_1 \in \varphi(A_i)$ and $j_1\in \varphi(B_{i'})$. Now let $j' < j_1$. Then $e_{j'} \preceq e_{j_1}$ which implies that $e_{j'} \leq \varphi(e_{j''})$ with $j'' < j$ but then $j'' \in A_i$ if and only if $j''\in B_i$ and so $j' \in \varphi(A_i)$ if and only if $j'\in \varphi(B_i)$. Therefore, $\varphi(A) \preceq \varphi(B)$.
\end{proof}

Consider two embeddings $\varphi, \psi: \cT_k \rightarrow \cT_{k'}$. We say that $\varphi \preceq \psi$ if and only if $\varphi(\{1\}\cup\cdots\cup\{k\}) \preceq \psi(\{1\}\cup\cdots\cup\{k\})$. By the previous proposition if $\varphi':\cT_{k'}\rightarrow \cT_{k''}$ is another embedding then $\varphi \preceq \psi$ implies that $\varphi'\circ\varphi \preceq \varphi'\circ\psi$. Note that if $\varphi \preceq \psi$ and $\psi\preceq \varphi$ then they agree on projections and furthermore, that two such embeddings are always comparable in this way.

\begin{proposition}
$\Out(\cT_\varphi)$ is torsion free.
\end{proposition}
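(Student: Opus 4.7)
The plan is to argue by contradiction. Assume $\theta \in \cO$ (the canonical representative subgroup of $\Out(\cT_\varphi)$ supplied by Theorem \ref{semidirect}) is non-trivial and satisfies $\theta^m = \id$ for some $m \geq 2$. By Proposition \ref{diagAuto} the automorphism $\theta$ moves some projection, so there is a level $n_0$ at which the regular embedding $\theta|_{\cT_{k_{n_0}}} : \cT_{k_{n_0}} \to \cT_{k_{s(n_0)}}$ (for a suitable $s(n_0) > n_0$) disagrees with the tower inclusion $\iota_{n_0, s(n_0)} := \varphi_{s(n_0)-1}\circ\cdots\circ\varphi_{n_0}$ on the projection lattice of $\cT_{k_{n_0}}$. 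By the discussion immediately preceding the proposition the associated ordered partitions $A$ and $B$ of $\{1,\ldots,k_{s(n_0)}\}$ are $\preceq$-comparable, and since they differ on projections we may assume (replacing $\theta$ by $\theta^{-1}$ if necessary) that $A \prec B$.

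The crux is to prove, by induction on $j \geq 1$, that the ordered partition of $\iota_{n_0, s^j(n_0)}$ is strictly $\prec$ the ordered partition of $\theta^j|_{\cT_{k_{n_0}}}$ (both viewed as $k_{n_0}$-block partitions of $\{1,\ldots,k_{s^j(n_0)}\}$). The proposition then follows immediately: taking $j = m$, the assumption $\theta^m = \id$ forces $\theta^m|_{\cT_{k_{n_0}}}$ and $\iota_{n_0, s^m(n_0)}$ to coincide as embeddings and hence to have equal partitions, contradicting the strict inequality. The base case $j = 1$ is the WLOG above. For the inductive step the key algebraic input is the commuting-square identity $\iota_{s(n_0), s^2(n_0)} \circ \theta|_{\cT_{k_{n_0}}} = \theta|_{\cT_{k_{s(n_0)}}} \circ \iota_{n_0, s(n_0)}$, since both sides realise $\theta|_{\cT_{k_{n_0}}}$ as an embedding into $\cT_{k_{s^2(n_0)}}$. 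At $j = 2$ this supplies a common intermediate partition $C := \iota_{s(n_0), s^2(n_0)}(B) = \theta|_{\cT_{k_{s(n_0)}}}(A)$, and two applications of the preceding Lemma to $A \prec B$, once with outer embedding $\iota_{s(n_0), s^2(n_0)}$ and once with $\theta|_{\cT_{k_{s(n_0)}}}$, yield the chain stating that the partition of $\iota_{n_0, s^2(n_0)}$ is $\prec C$, which is $\prec$ the partition of $\theta^2|_{\cT_{k_{n_0}}}$.

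The main obstacle is extending this telescoping argument uniformly to $j \geq 3$. The preceding Lemma only guarantees monotonicity of $\preceq$ under left composition by a fixed outer embedding, whereas the two natural Lemma-supplied inequalities at the $(j+1)$st stage are linked by a right-composition step that is not in general monotone. To bridge this I would strengthen the inductive hypothesis to include an auxiliary direction-preservation statement: that $\iota_{n, s(n)} \prec \theta|_{\cT_{k_n}}$ continues to hold at every level $n \geq n_0$, not merely at $n_0$. This statement can be extracted from the same commuting-square identity, which forces $\iota_{s(n_0), s(n)}(B)$ and the image under $\theta|_{\cT_{k_n}}$ of the partition of $\iota_{n_0, n}$ to agree --- a constraint strong enough to pin down the direction of $\iota_{n, s(n)}$ versus $\theta|_{\cT_{k_n}}$. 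Once this direction is stable across the tower, the $j = 2$ telescoping iterates cleanly and the contradiction with $\theta^m = \id$ is reached.
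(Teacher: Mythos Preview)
Your overall strategy coincides with the paper's: pick a chain of levels $n_1<\cdots<n_{m+1}$ with $\theta(\cT_{k_{n_i}})\subset\cT_{k_{n_{i+1}}}$, write $\varphi_i:=\iota_{n_i,n_{i+1}}$ and $\theta_i:=\theta|_{\cT_{k_{n_i}}}$, use the commuting identity $\theta_{i+1}\circ\varphi_i=\varphi_{i+1}\circ\theta_i$ together with left-monotonicity of $\preceq$, and derive a contradiction from $\theta^m=\id$. The $j=2$ step you wrote out is exactly the first two rungs of the paper's chain.

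Where you go astray is in believing there is an obstacle at $j\geq 3$ and in proposing to fix it by proving the auxiliary statement $\iota_{n,s(n)}\prec\theta|_{\cT_{k_n}}$ at every level. That auxiliary statement is not needed, and you give no argument for it beyond asserting the commuting square is ``a constraint strong enough''; in fact nothing in the commuting square forces the direction of $\varphi_i$ versus $\theta_i$ for $i>1$, since the Lemma gives no control under right composition. The induction you want actually goes through using only the single base inequality $\varphi_1\preceq\theta_1$: from the hypothesis $\varphi_{j-1}\circ\cdots\circ\varphi_1\prec\theta_{j-1}\circ\cdots\circ\theta_1$, left-compose with $\varphi_j$ (the Lemma applies), then use the commuting relation repeatedly to rewrite
\[
\varphi_j\circ\theta_{j-1}\circ\cdots\circ\theta_1=\theta_j\circ\varphi_{j-1}\circ\theta_{j-2}\circ\cdots\circ\theta_1=\cdots=\theta_j\circ\cdots\circ\theta_2\circ\varphi_1,
\]
and finally apply the Lemma once more to $\varphi_1\preceq\theta_1$ after left-composing with $\theta_j\circ\cdots\circ\theta_2$. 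This yields $\varphi_j\circ\cdots\circ\varphi_1\prec\theta_j\circ\cdots\circ\theta_1$ with no appeal to the direction of $\varphi_i$ versus $\theta_i$ for $i>1$. The paper records precisely this argument as a single chain of inequalities and equalities beginning and ending at $\varphi_m\circ\cdots\circ\varphi_1$, and then reads off that every $\preceq$ is an equality; your inductive formulation and the paper's chain are two presentations of the same computation.
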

\begin{proof}
Let $\theta\in \Aut(\cT_\varphi)$ such that it preserves the unclosed union and $\theta^m = \id$ for some $m\geq 1$. For any choice of $n_1\geq 1$ there exist $n_{m+1} \geq \cdots \geq n_2 \geq n_1$ such that 
\[
\theta(\cT_{k_{n_i}}) \subset \cT_{k_{n_{i+1}}}, \ \ {\rm for} \ \ 1\leq i\leq m.
\]
For ease of notation let $k_i := k_{n_i}$, $\varphi_i := \varphi_{n_i}$ and $\theta_i := \theta|_{\cT_{k_i}}$. This gives us the following identities:
\[
\varphi_m\circ\cdots\circ\varphi_1 = \theta_m\circ\cdots \circ \theta_1 \ \ {\rm and} \ \ \theta_{i+1}\circ\varphi_i = \varphi_{i+1}\circ\theta_i.
\]
If $\varphi_1 \preceq \theta_1$ then by the previous lemma 
\begin{center}
\begin{tabular}{r c l}
$\varphi_m\circ\cdots\circ\varphi_1 \ $ & $\preceq$ & $\ \varphi_m\circ\cdots\circ\varphi_3\circ\varphi_2\circ\theta_1 \ $ \\
& $=$ & $\ \varphi_m\circ\cdots\circ\varphi_3\circ\theta_2\circ\varphi_1$ \\
& $\preceq$ & $ \  \varphi_m\circ\cdots\circ\varphi_3\circ\theta_2\circ\theta_1\ $ \\
& $=$ & $\ \varphi_m\circ\cdots\circ\varphi_4\circ\theta_3\circ\theta_2\circ\varphi_1$ \\
& $\preceq$ &  $\ \cdots$ \\
& $\preceq$ & $\ \varphi_m\circ\cdots\circ\varphi_i\circ\theta_{i-1}\circ\cdots\circ\theta_1\ $ \\
& $=$ & $\ \varphi_m\circ\cdots \circ \varphi_{i+1}\circ\theta_i\cdots\circ\theta_2\circ\varphi_1$ \\
& $\preceq$ & $\ \cdots$ \\
& $\preceq$ & $\ \varphi_m\circ\theta_{m-1}\circ\cdots\circ\theta_1 \ $ \\
& $=$ & $\ \theta_m\circ\cdots\circ\theta_2\circ\varphi_1$ \\
& $\preceq$ & $ \ \theta_m\circ\cdots\circ\theta_1 \ $ \\
& $=$ & $\ \varphi_m\circ\cdots\circ\varphi_1.$
\end{tabular} \\
\end{center}
Hence, all of the inequalities are equalities which makes the first line give us that $\varphi_1 = \theta_1$ on $\proj(\cT_{k_1})$. The same holds true if we assume $\theta_1 \preceq \varphi_1$ and thus, $\theta(p) = p$ for all projections $p\in \cT_\varphi$ and by Proposition \ref{diagAuto} $\theta\in \overline{\Inn}(\cT_\varphi)$.
Therefore, $\Out(\cT_\varphi)$ is torsion free.
\end{proof}

%%%%%%%%%%%%%%%%%%%%%%%%%%%%%%%%%%%%
%%%%%%%%%%%%%%%%%%%%%%%%%%%%%%%%%%%%
%%%%%%%%%%%%%%%%%%%%%%%%%%%%%%%%%%%%

\section{The alternating embedding}

\begin{definition}
We say that $\varphi$ is an {\em alternating embedding} if $k_n = s_nt_n, n\geq 1$ with $s_n | s_{n+1}$ and $t_n| t_{n+1}$ and 
\[
\varphi_n(A) = I_{s_{n+1}/s_n} \otimes A \otimes I_{t_{n+1}/t_n}.
\]
\end{definition}

This is called alternating because $\varphi_n$ is a standard embedding of size $s_{n+1}/s_n$ followed by a nest embedding of size $t_{n+1}/t_n$, though the order does not matter as tensoring is associative. To each such embedding associate a pair of supernatural numbers $(s_\varphi,t_\varphi)$ where $s_\varphi = \prod_{n\geq 1} \frac{s_{n+1}}{s_n}$ and $t_\varphi = \prod_{n\geq 1} \frac{t_{n+1}}{t_n}$, the supernatural numbers of the standard and nest embeddings treated separately.

For these algebras there is a version of Glimm's theorem, that an alternating TUHF is characterized by a pair of supernatural numbers up to finite rearranging:

\begin{proposition}[\cite{Power}, Theorem 9.6]\label{GlimmTUHF} 
Let $\cT_\varphi$ and $\cT_\psi$ have alternating embeddings. Then $\cT_\varphi$ is isometrically isomorphic to $\cT_\psi$ if and only if there exists $r\in \mathbb Q$ such that $s_\varphi = r\cdot s_\psi$ and $t_\varphi = r^{-1}\cdot t_\psi$.
\end{proposition}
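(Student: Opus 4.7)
My plan is to prove both directions, with sufficiency constructive and necessity via a pair of numerical invariants extracted from the triangular structure.

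\emph{Sufficiency.} Suppose $r = a/b$ in lowest terms with $s_\varphi = r s_\psi$ and $t_\varphi = r^{-1} t_\psi$, so that $s_\varphi/a = s_\psi/b$ and $t_\varphi/b = t_\psi/a$ as supernatural numbers. I would invoke Theorem~\ref{PowerThm}: it suffices to match the defining chains level by level after refinement. The key facts are that the composition of alternating embeddings is again alternating (with parameters multiplied) and, conversely, any alternating embedding factorizes as a composition of alternating embeddings corresponding to any divisor decomposition of its parameters. So one may freely refine and pass to subsequences in each chain without changing the algebra. I would then choose increasing finite divisor sequences $c_i \to s_\varphi/a = s_\psi/b$ and $d_i \to t_\varphi/b = t_\psi/a$, and refine both chains so that at level $i$ both algebras equal $\cT_{abc_id_i}$, with the embedding from level $i$ to level $i+1$ being alternating of parameters $(c_{i+1}/c_i,\, d_{i+1}/d_i)$ in both cases. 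The refined chains are then literally identical, so $\cT_\varphi \simeq \cT_\psi$.

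\emph{Necessity.} Suppose $\cT_\varphi \simeq \cT_\psi$. By Theorem~\ref{PowerThm} the isomorphism extends to a $*$-isomorphism of the UHF closures carrying $C_\varphi$ to $C_\psi$, so Glimm's theorem gives $s_\varphi t_\varphi = s_\psi t_\psi$. For the finer statement I would extract two numerical invariants. Fix a minimal projection $p \in \cT_{k_m}$ at a nontrivial level $m$, and for $n \geq m$ let $\nu(p,n)$ be the maximal length of a consecutive $\preceq$-chain of minimal projections of $\cT_{k_n}$ contained in $p$, and let $\mu(p,n)$ be the number of maximal such chains. A direct computation on $\varphi_n(A) = I_{s_{n+1}/s_n} \otimes A \otimes I_{t_{n+1}/t_n}$ in lex-ordered coordinates yields $\nu(p,n) = t_n/t_m$ and $\mu(p,n) = s_n/s_m$. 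Hence $\lim_n \nu(p,n) = t_\varphi/t_m$ and $\lim_n \mu(p,n) = s_\varphi/s_m$ as supernatural numbers, and their equivalence classes modulo $\mathbb{Q}^\times$ give well-defined invariants $T(\cT_\varphi)$ and $S(\cT_\varphi)$, since changing $p$ or $m$ only rescales these limits by a rational. Because any isomorphism preserves minimal projections and the partial order $\preceq$, we get $S(\cT_\varphi) = S(\cT_\psi)$ and $T(\cT_\varphi) = T(\cT_\psi)$, producing rationals $r, r'$ with $s_\varphi = r s_\psi$ and $t_\varphi = (r')^{-1} t_\psi$. Finally, $s_\varphi t_\varphi = s_\psi t_\psi$ forces $r = r'$.

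The main obstacle is the combinatorial analysis of $\nu$ and $\mu$ for the alternating embedding and the verification that the limits descend to genuine invariants modulo rationals. Using the lex ordering on $M_{s_nt_n}$, the image of a minimal projection $e_i^{k_m}$ with $1 < i < k_m$ decomposes as a disjoint union of $s_n/s_m$ consecutive blocks of length $t_n/t_m$, separated in the $\preceq$-chain by the images of $e_{i\pm 1}^{k_m}$; this gives the claimed counts. Edge projections ($i=1$ or $i=k_m$) can be handled by first refining past $\cT_{k_m}$ to a level where $p$ lies properly in the interior of the chain.
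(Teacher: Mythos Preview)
The paper gives no proof of this proposition: it is quoted as Theorem~9.6 of Power's monograph and immediately used as a black box, so there is nothing in the paper to compare your argument against directly.

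Assessing the proposal on its own merits: your sufficiency sketch is correct and is the standard intertwining argument (alternating embeddings compose and factor, so one can refine both chains to a common one). The necessity argument, however, has a genuine gap at exactly the point you flag as ``the main obstacle'' but do not resolve. Your invariants $\nu(p,n)$ and $\mu(p,n)$ are defined relative to the chosen chain $(\cT_{k_n})$, not intrinsically to the algebra. You then assert that invariance follows because an isomorphism ``preserves minimal projections and the partial order $\preceq$'', but this does not suffice: under $\theta$ the images $\theta(e_1^{k_n}),\dots,\theta(e_{k_n}^{k_n})$ form an \emph{ordered partition} of $\{1,\dots,j_{n'}\}$ in the sense used later in this paper, and ordered partitions may interleave --- each $\theta(e_i^{k_n})$ can consist of many runs, so the run count of $\theta(p)$ at level $n'$ in $\cT_\psi$ bears no immediate relation to $\mu(p,n)$. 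What you actually need is that the $\mathbb Q^\times$-classes of the limits are independent of which alternating presentation one uses to compute them, and this requires analysing how a unital embedding $\cT_{k_n}\hookrightarrow\cT_{j_{n'}}$ compatible with two alternating systems treats runs. That analysis is precisely the content of Lemmas~\ref{Prestriction}--\ref{Psize} and the proof of Theorem~\ref{MainThm} in this paper (written there for automorphisms, but the same argument works for isomorphisms between two alternating TUHF algebras and in fact yields the necessity direction of the present proposition). Your closing paragraph carries out the computation of $\nu,\mu$ for a single alternating chain correctly, but does not touch the comparison between two chains, which is where all the work lies.
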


\begin{proposition}
Let $\cT_\varphi$ have an alternating embedding. To every prime $p$ that infinitely divides both $s_\varphi$ and $t_\varphi$ there is a non-diagonal automorphism of $\cT_\varphi$, called a {\em shift automorphism} and denoted $\theta_p$.
\end{proposition}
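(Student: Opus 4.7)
My plan is to build $\theta_p$ explicitly as an inductive limit of compatible embeddings that ``slide'' a factor of $p$ across the alternating tensor decomposition. First, by passing to a suitable increasing subsequence of indices (which does not change the algebra, since $p^\infty$ divides both supernatural numbers one can find common increasing subsequences in which each step contributes a factor of $p$ to both sides), I may assume $p \mid s_{n+1}/s_n$ and $p \mid t_{n+1}/t_n$ for every $n \geq 1$. Setting $\alpha_n = s_{n+1}/(ps_n)$ and $\beta_n = t_{n+1}/(pt_n)$, the defining embeddings take the form
\[
\varphi_n(A) \;=\; I_{p\alpha_n} \otimes A \otimes I_{p\beta_n}.
\]

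For the construction, I introduce two candidate shift maps $\theta_n^\pm : \cT_{k_n} \to \cT_{k_{n+1}}$ defined by
\[
\theta_n^+(A) \;=\; I_{\alpha_n} \otimes A \otimes I_{p^2\beta_n} \quad\text{and}\quad \theta_n^-(A) \;=\; I_{p^2\alpha_n} \otimes A \otimes I_{\beta_n},
\]
each being itself an alternating (hence regular) unital isometric embedding of $\cT_{k_n}$ into $\cT_{k_{n+1}}$. I then verify the compatibility $\theta_{n+1}^+ \circ \varphi_n = \varphi_{n+1} \circ \theta_n^+$ by a direct tensor-factor bookkeeping: both sides, after absorbing adjacent identity factors, simplify to $I_{p\alpha_n\alpha_{n+1}} \otimes A \otimes I_{p^3\beta_n\beta_{n+1}}$ in $\cT_{k_{n+2}}$. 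The analogous identity holds for $\theta^-$, so the two families assemble into unital isometric algebra homomorphisms $\theta^\pm : \cT_\varphi \to \cT_\varphi$.

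Next I show $\theta^+$ and $\theta^-$ are mutually inverse, so that $\theta_p := \theta^+$ lies in $\Aut(\cT_\varphi)$. Another tensor computation gives
\[
\theta_{n+1}^+(\theta_n^-(A)) \;=\; I_{p^2\alpha_n\alpha_{n+1}} \otimes A \otimes I_{p^2\beta_n\beta_{n+1}} \;=\; \varphi_{n+1}\circ\varphi_n(A),
\]
which is precisely the image of $A$ two levels up the inductive system, and the analogous identity for $\theta^-_{n+1} \circ \theta^+_n$ holds by the same argument. Finally, to see that $\theta_p$ is not approximately diagonal, it suffices by Proposition~\ref{diagAuto} to exhibit a projection $e$ with $\theta_p(e) \neq e$. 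Take $e = e_1^{k_n}$; then $\theta_p(e_1^{k_n}) = I_{\alpha_n} \otimes e_{11}^{k_n} \otimes I_{p^2\beta_n}$, whereas the image of $e_1^{k_n}$ in $\cT_{k_{n+1}}$ under $\varphi_n$ is $I_{p\alpha_n} \otimes e_{11}^{k_n} \otimes I_{p\beta_n}$. A short direct check of the diagonal entries shows these two projections have their 1's in different positions, hence they are distinct projections in $\cT_{k_{n+1}} \subset \cT_\varphi$, so $\theta_p \notin \overline{\Inn}(\cT_\varphi)$.

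The main obstacle is the mutual-inverse step that upgrades $\theta_p$ from an endomorphism to an automorphism; this is where the hypothesis that $p$ infinitely divides \emph{both} $s_\varphi$ and $t_\varphi$ is genuinely used, since without being able to simultaneously produce a factor of $p$ on each side of the embedding we cannot define $\theta^-$ as the inverse ``slide.'' The verification is conceptually simple but notationally delicate, amounting to careful regrouping of identity tensor factors.
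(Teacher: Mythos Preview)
Your proof is correct and follows essentially the same approach as the paper: pass to a subsequence so that $p$ divides each quotient $s_{n+1}/s_n$ and $t_{n+1}/t_n$, define the shift and its inverse as alternating embeddings with one factor of $p$ moved across, verify the intertwining relation with $\varphi_n$ for well-definedness, check that the two shifts compose to $\varphi_{n+1}\circ\varphi_n$, and exhibit a projection not fixed by $\theta_p$. The only cosmetic difference is that your $\theta_p := \theta^+$ slides a factor of $p$ from the standard side to the nest side, whereas the paper's $\theta_p$ goes the other way (your $\theta^-$); since the statement only asserts existence of a non-diagonal automorphism, this is immaterial.
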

\begin{proof}
Without loss of generality, by dropping to a subsequence of the $k_n$, we may assume that $p | \frac{s_{n+1}}{s_n}$ and $p | \frac{t_{n+1}}{t_n}$.
Define a map $\theta_p:\bigcup_{n\geq 1} \cT_{k_n} \rightarrow \bigcup_{n\geq 1} \cT_{k_n}$ by
\[
A\in \cT_{k_n} \mapsto \theta_p(A) = I_{\frac{p s_{n+1}}{s_n}} \otimes A \otimes I_{\frac{t_{n+1}}{p t_n}} \in \cT_{k_{n+1}}. 
\]
First off, $\theta_p$ is well-defined: 
\[
\theta_p(\varphi_n(A)) =  I_{\frac{p s_{n+2}}{s_{n+1}}} \otimes \left( I_{\frac{s_{n+1}}{s_n}} \otimes A \otimes I_{\frac{t_{n+1}}{t_n}} \right) \otimes I_{\frac{t_{n+2}}{p t_{n+1}}}
\]\[
= I_{\frac{s_{n+2}}{s_{n+1}}} \otimes \left( I_{\frac{ps_{n+1}}{s_n}} \otimes A \otimes I_{\frac{t_{n+1}}{pt_n}} \right) \otimes I_{\frac{t_{n+2}}{ t_{n+1}}} = \varphi_{n+1}(\theta_p(A)).
\]
Note that $\theta_p(e_1^{(k_n)}) \neq \varphi_{n}(e_1^{(k_n)})$ and so if this extends to an automorphism it will not be approximately diagonal.
Second, $\theta_p^{-1}$ is defined in the most obvious way:
\[
\theta_p^{-1}(\theta_p(A)) = I_{\frac{s_{n+2}}{ps_{n+1}}} \otimes \left( I_{\frac{ps_{n+1}}{s_n}} \otimes A \otimes I_{\frac{t_{n+1}}{pt_n}} \right) \otimes I_{\frac{pt_{n+2}}{ t_{n+1}}} 
\]\[
= I_{\frac{s_{n+2}}{s_n}} \otimes A \otimes I_{\frac{t_{n+2}}{t_n}} = \varphi_{n+1}(\varphi_n(A)).
\]
Similarly, $\theta_p(\theta_p^{-1}(A)) = \varphi_{n+1}(\varphi_n(A))$ as well. Hence, $\theta_p$ is an isometric automorphism on the unclosed union and so extends to be an isometric automorphism of $\cT_\varphi$.
\end{proof}

Let $p_1,\cdots, p_m$ be distinct primes that infinitely divide $s_\varphi$ and $t_\varphi$ and $\delta_1,\cdots, \delta_m \in \mathbb N$. For $u = \prod_{i=1}^m p_i^{\delta_i}$ define $\theta_u \in \Aut(\cT_\varphi)$ to be 
\[
\theta_u = \theta_{p_1}^{\delta_1}\circ\cdots\circ\theta_{p_m}^{\delta_m}.
\]
Note that the order of the $p_i$ does not matter as all of these automorphisms commute.

%%%%%%%%%%%%%%%%

We shift focus now back to ordered partitions. Before proving the main theorem of the section we first need two definitions and two technical lemmas.

Recall that $P = \dot\cup_{i=1}^n P_i$ is an ordered partition if $|P_1| = \cdots = |P_{n}|=m$ and $P_1 \leq P_2\leq \cdots \leq P_n$. This ordering can also be given by letting $P_i = \{p_{1,i},\cdots, p_{m,i}\}$ with $p_{1,i} < p_{2,i} < \cdots < p_{m,i}$ and then $P_i \leq P_j$ gives $p_{k,i} < p_{k,j}$ for every $1\leq k\leq m$.

We will call $P = \dot\cup_{i=1}^n P_i$ an {\em ordered subpartition} 
if $|P_1| \geq |P_2| \geq \cdots |P_n|$ and $P_i \leq P_j$ for $1\leq i < j\leq n$, meaning that $p_{l,i} < p_{l,j}$ for all $1\leq l\leq |P_j|$.

\begin{lemma}\label{Prestriction}
Let $P = \dot\cup_{i=1}^n P_i = \{1,\cdots, m\}$ be an ordered partition. Then for $1\leq m'\leq m$ we have that
\[
P\cap \{1,\cdots, m'\} = \dot\cup_{i=1}^n (P_i \cap \{1,\cdots, m'\} )
\]
is an ordered subpartition.
\end{lemma}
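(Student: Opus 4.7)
The plan is to unpack both definitions in coordinates and then observe that intersecting each block $P_i$ with $\{1,\ldots,m'\}$ just truncates the increasing list of its elements to an initial segment. Write $P_i = \{p_{1,i} < p_{2,i} < \cdots < p_{m,i}\}$, so that the hypothesis ``$P$ is an ordered partition'' becomes $p_{k,i} < p_{k,j}$ for all $1 \le k \le m$ whenever $i < j$. Let $Q_i := P_i \cap \{1,\ldots,m'\}$; since the $p_{k,i}$ are listed in increasing order, we have $Q_i = \{p_{1,i}, \ldots, p_{|Q_i|,i}\}$, i.e.\ an initial segment.

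The first step is to verify the size inequality $|Q_1| \geq |Q_2| \geq \cdots \geq |Q_n|$. Fix $i<j$ and suppose $l \leq |Q_j|$. Then $p_{l,j} \leq m'$, and by the ordering hypothesis $p_{l,i} < p_{l,j} \leq m'$, so $p_{l,i} \in Q_i$, giving $l \leq |Q_i|$. Hence $|Q_j| \leq |Q_i|$.

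The second step is to verify the subpartition order $Q_i \leq Q_j$ for $i<j$, i.e.\ that the $l$-th smallest element of $Q_i$ is strictly less than the $l$-th smallest of $Q_j$ for every $1 \le l \le |Q_j|$. Using the initial-segment description, those $l$-th smallest elements are exactly $p_{l,i}$ and $p_{l,j}$ (the previous step ensures $l \le |Q_i|$ as well), and $p_{l,i} < p_{l,j}$ is precisely the original ordering of $P$.

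I do not anticipate any real obstacle here: once the correct coordinate description $Q_i = \{p_{1,i},\ldots,p_{|Q_i|,i}\}$ is in place, both conditions follow directly from the coordinate-wise strict inequalities $p_{k,i} < p_{k,j}$. The only mild subtlety is to confirm that $Q_i$ really is an initial segment of $P_i$, which follows because intersecting an increasing list with a downward-closed set $\{1,\ldots,m'\}$ retains exactly the smallest entries.
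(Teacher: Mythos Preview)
Your proof is correct and follows essentially the same approach as the paper's: the key observation in both is that if the $k$th smallest element of $P_j$ lies in $\{1,\ldots,m'\}$ then so does the (strictly smaller) $k$th smallest element of $P_i$ for $i<j$, which simultaneously yields the size inequality $|Q_i|\ge |Q_j|$ and the coordinatewise order $Q_i\le Q_j$. Your version just makes explicit the initial-segment description of each $Q_i$ that the paper's one-line argument leaves implicit.
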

\begin{proof}
If $P_i \leq P_j$ then the $k$th smallest element of $P_i$ precedes the $k$th smallest element of $P_j$. Hence, if the latter is in $\{1,\cdots, m'\}$ then the former will be as well, and so, $P_i \cap \{1,\cdots, m'\} \leq P_j \cap \{1,\cdots, m'\}$. 
\end{proof}

A subset $R$ is called a {\em run} if whenever $i < j < k$ and $i,k\in R$ then $j\in R$. If $R$ and $S$ are runs we say that $R < S$ if $r < s$ for all $r\in R$ and $s \in S$.

\begin{lemma}\label{Psize}
Let $R_1 < R_2 < \cdots < R_n$ be runs in $\{1,\cdots, r\}$ and $S_1 < \cdots < S_n < S_{n+1}$ be runs in $\{1,\cdots, s\}$ with $|S_1| = \cdots = |S_n| \geq 1$. If $\theta$ is a unital embedding of $\cT_r$ into $\cT_s$ such that $\theta(R) = S$ as sets and $\theta(R_i) \supset S_i$ then $|R_1| \leq  \cdots \leq  |R_n|$.
\end{lemma}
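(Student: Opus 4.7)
The plan is to apply Lemma~\ref{Prestriction} to the ordered partition of $\{1,\dots,s\}$ induced by $\theta$, restricted to a carefully chosen initial segment, and then combine the resulting non-increasing block sizes with an averaging argument.

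Write $\theta(e_j) = \sum_{k\in A_j} e_k$, so that $\{1,\dots,s\} = A_1 \dot\cup \cdots \dot\cup A_r$ is the ordered partition associated with $\theta$. Set $B_k := \theta(R_k) = \bigcup_{j\in R_k} A_j$, $c := |S_1| = \cdots = |S_n|$, and $\ell_i := \max S_i$. First I would pin down how the $B_k$'s sit inside the initial segment $\{1,\dots,\ell_i\}$: since $S_{i+1},\dots,S_{n+1}$ all lie strictly past $\ell_i$, one has $S \cap \{1,\dots,\ell_i\} = S_1\cup\cdots\cup S_i$, a set of size $ic$, and using $\theta(R) = S$ together with the disjointness of the $B_k$'s this gives $\sum_k |B_k \cap \{1,\dots,\ell_i\}| = ic$. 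Since $S_k \subset B_k \cap \{1,\dots,\ell_i\}$ forces $|B_k \cap \{1,\dots,\ell_i\}| \ge c$ for $k\le i$, equality must hold term by term, so $B_k \cap \{1,\dots,\ell_i\} = S_k$ for $k\le i$ and $B_k \cap \{1,\dots,\ell_i\} = \emptyset$ for $k>i$.

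Next, for each $i$ with $1\le i\le n-1$, I would apply Lemma~\ref{Prestriction} with $m' = \ell_{i+1}$ to conclude that $a_j := |A_j \cap \{1,\dots,\ell_{i+1}\}|$ is non-increasing in $j$. By the identification of the first step, $a_j = |A_j \cap S_k|$ whenever $j\in R_k$ and $k\le i+1$, and summing over $j\in R_k$ yields $\sum_{j\in R_i} a_j = \sum_{j\in R_{i+1}} a_j = c$. Because every index in $R_i$ precedes every index in $R_{i+1}$, the non-increasing property gives $\min_{j\in R_i} a_j \ge \max_{j\in R_{i+1}} a_j$; combined with $\min_{j\in R_i} a_j \le c/|R_i|$ and $\max_{j\in R_{i+1}} a_j \ge c/|R_{i+1}|$ (the minimum is at most the average, and the maximum is at least the average), this produces $c/|R_i| \ge c/|R_{i+1}|$. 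Since $c\ge 1$ we conclude $|R_i| \le |R_{i+1}|$, and chaining over $i$ finishes the proof.

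The main obstacle will be the first step, where one must play the global equality $\theta(R) = S$ against the local containments $S_k \subset B_k$ tightly enough to identify $B_k \cap \{1,\dots,\ell_i\}$ exactly as $S_k$. Once this identification is in hand, the averaging argument is routine.
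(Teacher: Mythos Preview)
Your proof is correct and follows essentially the same route as the paper's: restrict the ordered partition induced by $\theta$ to an initial segment, invoke Lemma~\ref{Prestriction} to get non-increasing block sizes, and then combine this with the fact that the sums over each $R_i$ coincide (each equal to $|S_i|$) to force $|R_i|\le|R_{i+1}|$. The only substantive difference is cosmetic: you carefully justify the identification $B_k\cap\{1,\dots,\ell_i\}=S_k$ by a counting argument, which the paper asserts in one line, and you phrase the final step as a min/max averaging inequality rather than the paper's ``equal sums with every left summand dominating every right summand'' formulation.
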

\begin{proof}
Let $R_i = \{r_1^i,\cdots, r_{m_i}^i\}$ for $1\leq i\leq n$. Because $\theta$ is a unital embedding we know that it takes the indices 
\[
r_1^1 < r_2^1 < \cdots < r_{m_1}^1 < r_1^2 < r_2^2 < \cdots < r_{m_n}^n
\] 
to the ordered partition 
\[
\theta(r_1^1) \leq \theta(r_2^1) \leq \cdots \leq \theta(r_{m_1}^1) \leq \theta(r_{1}^2) \leq \cdots \leq \theta(r_{m_n}^n).
\]
In particular, they all have the same size, $|\theta(r_j^i)| = s/r$.
By the previous lemma this order is maintained when considering only the first part of $S$, leading to the ordered subpartition
\[
\theta(r_1^1)\cap (S_1 \cup \cdots \cup S_n) \leq \cdots \leq \theta(r_{m_n}^n)\cap (S_1\cup \cdots \cup S_n).
\]
Since $\theta(R_i)\supset S_i$ the ordered subpartition becomes
\[
\theta(r_1^1)\cap S_1 \leq \cdots \leq \theta(r_{n_1}^1)\cap S_1 \leq \theta(r_1^2)\cap S_2 \leq \cdots \leq \theta(r_{m_n}^n)\cap S_n.
\]
This implies that 
\[
|\theta(r_1^1)\cap S_1| \geq \cdots \geq |\theta(r_{n_1}^1)\cap S_1| \geq |\theta(r_1^2)\cap S_2| \geq \cdots \geq |\theta(r_{m_n}^n)\cap S_n|.
\]
However, if $i < i'$
\[
\sum_{k=1}^{m_i} |\theta(r_k^i)\cap S_i| = |S_i| = |S_{i'}| = \sum_{k=1}^{m_{i'}} |\theta(r_k^{i'})\cap S_{i'}|
\]
with every summand on the left being greater than every summand on the right, and so we must have $m_i \leq m_{i'}$.
In other words, 
\[
|R_1| \leq |R_2| \leq \cdots \leq |R_n|.
\]
\end{proof}

%%%%%%%Main Theorem%%%%%%%%%%%
%%%%%%%%%%%%%%%%%%%%%%%%%%%%
\begin{theorem}\label{MainThm}
Let $\cT_\varphi$ have an alternating embedding for $k_n = s_n t_n$ and $\theta\in \Aut(\cT_\varphi)$. Then there exists a approximately diagonal automorphism $\psi$ and $u,v\in \mathbb N$ such that $\theta = \theta_u\circ\theta_v^{-1}\circ\psi$. Moreover, this factorization is unique if $\gcd(u,v) = 1$.
\end{theorem}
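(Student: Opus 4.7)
The plan is to first normalize $\theta$ via an approximately diagonal automorphism so that it acts by regular embeddings level by level, then detect a pair of integers $(u,v)$ measuring how $\theta$ redistributes multiplicity between the $s$-factor and the $t$-factor of the alternating structure, and finally recognize $\theta_v \circ \theta_u^{-1} \circ \theta$ as a projection-preserving (hence approximately diagonal) automorphism.

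To set this up I would apply Lemma \ref{unclosed}: compose $\theta$ on the left with a suitable $\gamma \in \overline{\Inn}(\cT_\varphi)$ so that $\theta$ preserves $\bigcup_n \cT_{k_n}$ and acts by regular embeddings, and do the same for $\theta^{-1}$. Passing to a subsequence of the $(k_n)$ (grouping several consecutive alternating steps at a time, which preserves the alternating form), I arrange that both $\theta$ and $\theta^{-1}$ send $\cT_{k_n}$ into $\cT_{k_{n+1}}$ for every $n$. After this normalization, $\theta_n := \theta|_{\cT_{k_n}} : \cT_{k_n} \to \cT_{k_{n+1}}$ is determined by the ordered partition $\{\theta_n(e_i^{k_n})\}_{i=1}^{k_n}$ of $\{1,\dots,k_{n+1}\}$, to be compared with the canonical alternating partition produced by $\varphi_n$ in which each piece consists of $s_{n+1}/s_n$ consecutive runs of length $t_{n+1}/t_n$.

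The technical heart of the argument is to show that this ordered partition has the alternating shape: $\theta_n(e_i^{k_n})$ is a disjoint union of $u_n$ consecutive runs of common length $v_n$ (with $u_n v_n = k_{n+1}/k_n$), and $u_n, v_n$ are independent of $i$. The idea is to exploit the fact that $\theta_{n+1}^{-1} \circ \theta_{n+1}$, viewed as a map from $\cT_{k_{n+1}}$ into $\cT_{k_{n+2}}$, is the canonical inclusion $\varphi_{n+1}$, whose run structure is completely understood; factoring the run partition of $\varphi_{n+1}\circ\varphi_n(e_i^{k_n})$ through the intermediate regular embeddings $\theta_n$ and $\theta_{n+1}^{-1}$ and then applying Lemmas \ref{Psize} and \ref{Prestriction} forces the intermediate runs to be of equal length and the run count to be independent of $i$. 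Setting $r_n := u_n (s_n/s_{n+1}) = (t_{n+1}/t_n)/v_n$ gives the shift ratio at level $n$; consistency between successive levels, together with the restriction that $u_n$ and $v_n$ must divide the appropriate quotients, forces $r_n$ to stabilize to a rational $r = u/v$ in lowest terms. Moreover every prime dividing $v$ (resp.\ $u$) infinitely divides $s_\varphi$ (resp.\ $t_\varphi$), so that $\theta_u$ and $\theta_v$ are honest shift automorphisms as defined in the preceding proposition.

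Putting this together, $\psi := \theta_v \circ \theta_u^{-1} \circ \theta$ is, at each sufficiently large level, a regular embedding with shift ratio $1$, i.e.\ whose ordered partition coincides with that of $\varphi_n$; therefore $\psi$ fixes every diagonal projection of $\cT_\varphi$, and Proposition \ref{diagAuto} gives $\psi \in \overline{\Inn}(\cT_\varphi)$. Since the shifts commute, rearranging yields $\theta = \theta_u \circ \theta_v^{-1} \circ \psi$. For uniqueness when $\gcd(u,v)=1$: if also $\theta = \theta_{u'} \circ \theta_{v'}^{-1} \circ \psi'$ with $\gcd(u',v')=1$, then $\theta_{uv'} \circ \theta_{u'v}^{-1} = \psi'\psi^{-1} \in \overline{\Inn}(\cT_\varphi)$ has shift ratio $1$, forcing $uv' = u'v$, and then coprimality gives $u=u'$ and $v=v'$. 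The main obstacle is the third paragraph: a priori, a regular embedding permits an arbitrary ordered partition, and it is only the interaction between $\theta$ and $\theta^{-1}$ at several levels at once, mediated through the two technical lemmas, that pins the partition down to the alternating shape.
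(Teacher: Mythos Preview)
Your proposal is correct and follows essentially the same route as the paper: use the interplay between $\theta$ and $\theta^{-1}$ at adjacent levels, apply Lemmas~\ref{Prestriction} and~\ref{Psize} to force the image partitions into the alternating shape $I_s\otimes(\,\cdot\,)\otimes I_t$, extract the ratio $u/v$, check consistency across levels, and then invoke Proposition~\ref{diagAuto}. Two minor remarks: the paper does not preprocess with Lemma~\ref{unclosed} but works directly with projection images (your normalization is a harmless simplification), and your ``technical heart'' paragraph compresses what in the paper is a two-pass argument---first Lemma~\ref{Psize} only shows that the \emph{leading} runs $Q_{1,1},\dots,Q_{1,k_m}$ have equal length, and it is a second application (to $\theta$ rather than $\theta^{-1}$, producing the auxiliary runs $R_i$) together with matching $Q'_{j,i}$ against the known $P_{j,i}$ that pins down \emph{all} runs---so when you write the details, be sure to make that bootstrap explicit.
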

\begin{proof}
Let $m\geq 1$ then there exist 
$m' \geq n \geq m$ such that
\[
\theta^{-1}(\proj(\cT_{k_m})) \subset \proj(\cT_{k_n}), \ \  {\rm and} \ \ \theta(\proj(\cT_{k_n})) \subset \proj(\cT_{k_{m'}}).
\]
We will use the language of ordered partitions. In particular, let 
\[
P = \dot\bigcup_{i=1}^{k_m} P_i = \varphi_{m'-1}\circ\cdots\circ\varphi_m(\{1\}\cup\cdots\cup \{k_m\}), 
\]
that is the image in $k_{m'}$ of the elementary projections in $k_m$. 
Writing these as the disjoint union of runs we get 
\[
P_i = \dot\bigcup_{j=1}^{s_{m'}/s_m} P_{j,i} \ \ {\rm and} \ \ P_{1,1} < P_{1,2} < \cdots < P_{1, k_m} < P_{2,1} \cdots < P_{s_{m'}/s_m, k_m}
\]
with $|P_{j,i}| = t_{m'}/t_m$, which is obvious from the alternating embedding.
Similarly, let 
\[
Q = \dot\bigcup_{i=1}^{k_m} Q_i = \theta^{-1}(\{1\}\cup\cdots\cup\{k_m\}), \ \ {\rm that\ is} \ \ \theta^{-1}(e_{i}^{k_m}) = \sum_{j\in Q_i} e_{j}^{k_n}.
\]
Also decompose this into runs
\[
Q_i = \dot\bigcup_{j=1}^{s} Q_{j,i} \ \ {\rm and} \ \ Q_{1,1} < Q_{1,2} < \cdots < Q_{s, k_m}
\] 
where many of the $Q_{j,i}$ may be empty, but there are never $k_m -1$ empty $Q_{j,i}$ all in a row because if this was not so then we could represent the partition as a shorter sequence. Note that $Q_{1,1}$ and $Q_{s, k_m}$ are nonempty.

\vskip 6 pt
\noindent {\bf Claim:} $|Q_{1,1}| = |Q_{1,2}| = \cdots = |Q_{1,k_m}|$. 
\vskip 6 pt

\noindent {\bf Proof of Claim:} \\
\noindent First, we know that
\[
P_{1,i} = P_i \cap P_{1,i} = \theta(\theta^{-1}(e_i^{k_m}))\cap P_{1,i} = \theta(Q_i)\cap P_{1,i} =   \bigcup_{j=1}^{k_n/k_m} \theta(Q_{j,i}) \cap P_{1, i}.
\]
By Lemma \ref{Prestriction} we get an ordered subpartition by intersecting with $P_{1,1}$,
\[
\left(\theta(Q_{1,1}) \leq \theta(Q_{1,2}) \cdots \leq \theta(Q_{s,k_m}) \right) \bigcap P_{1, 1} 
\]
\[
  =  \theta(Q_{1,1})\cap P_{1,1} \leq \emptyset \leq  \cdots \leq \emptyset \leq \theta(Q_{2,1})\cap P_{1,1} \leq \emptyset \leq \cdots  
  \]
  \[
 \cdots \leq \emptyset \leq \theta(Q_{3,1})\cap P_{1,1} \leq \emptyset \leq \cdots 
 \leq \emptyset \leq \theta(Q_{s,1})\cap P_{1,1} \leq \emptyset \leq \cdots \leq \emptyset
\]
which implies that if any $\theta(Q_{j,1})\cap P_{1,1}$ is nonempty then all the intermediate $Q_{1,1} < Q_{j',i'} <  Q_{j,1}$ must be empty to remain an ordered subpartition under the above restriction, but this contradicts the requirement that there cannot be $k_m -1$ empty $Q_{j',i'}$ in a row. Therefore, $\theta(Q_{1,1}) \cap P_{1,1} = P_{1,1}$.
\vskip 6 pt

\noindent Again
\[
\left(\theta(Q_{1,1}) \leq \theta(Q_{1,2}) \leq \cdots \leq \theta(Q_{s, k_m}) \right) \bigcap (P_{1, 1} \cup P_{1,2}) 
\]
\[
  =  \theta(Q_{1,1})\cap P_{1,1} \leq \theta(Q_{1,2})\cap P_{1,2} \leq \emptyset \leq  \cdots \leq \emptyset \leq \theta(Q_{2,2})\cap P_{1, 2} \leq \emptyset \leq \cdots 
\]
to get that $\theta(Q_{1,2}) \cap P_{1,2} = P_{1,2}$. Repeat this recursively to get that $\theta(Q_{1,i})\cap P_{1,i} = P_{1,i}$. Noting that all $|P_{1,i}| = |P_{1,i'}|$ we have satisfied the hypotheses of Lemma \ref{Psize}. Hence, $|Q_{1,1}| \leq \cdots \leq |Q_{1,k_m}|$.
The reverse direction is given by the fact that $Q_{1,1} < \cdots < Q_{1,k_m}$ is the first part of an ordered partition. Therefore, $|Q_{1,1}| = \cdots = |Q_{1,k_m}|$ and the claim has been verified.

\vskip 12 pt

This tells us that any isometric automorphism of an alternating embedding TUHF sends the elementary projections from a finite level to a partition with a specific starting pattern, that is, one iteration of equal runs. We apply this to the elementary projections of $\cT_{k_n}$ to get that there exist runs
\[
R_1 \leq R_2 \leq R_3 \leq \cdots \leq R_{k_n} 
\]
such that $|R_i| = |R_j| = r \geq 1$, $\cup R_i = \{1,\cdots, k\}$, $k\leq k_{m'}$ and $\theta(e_{i}^{k_n}) \supset R_i$. 
\vskip 6 pt

Let $Q_{j,i}' = \cup_{l \in Q_{j,i}} R_l$ giving us runs with $|Q_{j,i}'| = |Q_{j,i}| \cdot r$ and $\theta(Q_{j,i}) \supset Q_{j,i}'$.
Then the following partitions
\[
P \cap \{1,\cdots, k\} = \theta(\theta^{-1}(\{1,\cdots, k_m\})) \cap \{1,\cdots, k\} = \theta(Q)\cap \{1,\cdots, k\}
\]
must be equal. Which implies that 
\[
\cup P_{j,i} \cap \{1,\cdots, k\} = Q_{1,1}' < Q_{1,2}' < Q_{1,3}' < \cdots < Q_{j,i}' < \cdots < Q_{s,k_m}',
\]
where both are decompositions into runs.
Hence, $P_{j,i} = Q_{j,i}'$ which implies that $t = |Q_{j,i}| = |Q_{j,i}'|/r = |P_{j,i}|/r = \frac{t_{m'}}{t_m r}$, they are all the same size. Therefore, for $A\in \proj(\cT_{k_m})$
\[
\theta^{-1}|_{\cT_{k_m}}(A) = I_{s} \otimes A \otimes I_{t}.
\]
We have then, that $t \cdot s \cdot k_m = k_n$. 
Let $\frac{s}{s_n/s_m} = \frac{u}{v}$ where $u = \prod_{i=1}^l p_i^{\delta_i}$ and $v = \prod_{j=1}^k q_j^{\epsilon_j}$ with $p_1,\cdots, p_l, q_1,\cdots, q_k$ distinct primes and $\delta_1,\cdots, \delta_l, \epsilon_1,\cdots, \epsilon_k \in \mathbb N$. Because $st = \frac{k_n}{k_m} = \frac{s_n}{s_m}\frac{t_n}{t_m}$ then $\frac{t}{t_n/t_m} = \frac{v}{u}$. This gives us that $v | \frac{s_n}{s_m}$ and $u | \frac{t_n}{t_m}$.
Hence, for $A\in \proj(\cT_{k_m})$
\[
\theta^{-1}|_{\cT_{k_m}}(A) = I_{s} \otimes A \otimes I_{t} = I_{\frac{s_n}{s_m} \frac{u}{v}} \otimes A \otimes I_{\frac{t_n}{t_m}\frac{v}{u}}.
\]
\[
= \theta_{p_1}^{\delta_1}\circ\cdots\circ\theta_{p_l}^{\delta_l}\circ\theta_{q_1}^{-\epsilon_1}\circ\cdots\circ\theta_{q_k}^{-\epsilon_k}(A).
\]
Repeat this argument for any $\theta^{-1}(\proj(\cT_{k_{m'}})) \subset \proj(\cT_{k_{n'}})$, getting a similar result,
\[
\theta^{-1}|_{\cT_{k_{m'}}}(A) = \theta_{p'_1}^{\delta'_1}\circ\cdots\circ\theta_{p'_{l'}}^{\delta'_{l'}}\circ\theta_{q'_1}^{-\epsilon'_1}\circ\cdots\circ\theta_{q'_{k'}}^{-\epsilon'_{k'}}(A).
\]
However, these two descriptions must agree on $\cT_{k_m} \hookrightarrow \cT_{k_{m'}}$ and so $u=u', v=v'$ and note that $v | \frac{s_{n'}}{s_{m'}}$ and $u | \frac{t_{n'}}{t_{m'}}$. In this way we see that $\theta^{-1} = \theta_u\circ\theta_{v}^{-1}$ on the projections of $\cT_\varphi$ and that $v^\infty | s_\varphi, u^\infty | t_\varphi$. Finally then, by Proposition \ref{diagAuto} there exists a approximately diagonal automorphism $\psi$ such that $\theta = \theta_u^{-1}\circ\theta_v\circ\psi$ which also gives that $u^\infty | s_\varphi, v^\infty | t_\varphi$. Uniqueness of the factorization when $\gcd(u,v) = 1$ is obvious since we have seen that shift automorphisms and their inverses commute with other such automorphisms.
Therefore, the result is established.
\end{proof}

%%%%%%%%%%%%%%%%%%%%%%%%%%%%%%%%%%%%%%%%%
\begin{corollary}[cf. \cite{Power2}, Theorem 1]
Let $\cT_\varphi$ have an alternating embedding. Then $\Out(\cT_\varphi) \simeq \mathbb Z^d$ where $d$ is the number of common prime factors that infinitely divide both $s_\varphi$ and $t_\varphi$. 
\end{corollary}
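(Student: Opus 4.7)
The plan is to construct an explicit isomorphism $\Phi : \mathbb{Z}^d \to \Out(\cT_\varphi)$. Let $p_1, \ldots, p_d$ be the primes that infinitely divide both $s_\varphi$ and $t_\varphi$. For $(n_1, \ldots, n_d) \in \mathbb{Z}^d$, write $u = \prod_{i} p_i^{\max(n_i, 0)}$ and $v = \prod_i p_i^{\max(-n_i, 0)}$, and set
\[
\Phi(n_1, \ldots, n_d) = [\theta_u \circ \theta_v^{-1}].
\]
Since the shift automorphisms $\theta_{p_i}$ all commute pairwise (as noted immediately after their definition), $\Phi$ is a well-defined group homomorphism.

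For surjectivity, I would invoke Theorem \ref{MainThm}: any $\theta \in \Aut(\cT_\varphi)$ factors as $\theta_{u'} \circ \theta_{v'}^{-1} \circ \psi$ with $\psi \in \overline{\Inn}(\cT_\varphi)$ and $\gcd(u', v') = 1$. The closing lines of the proof of Theorem \ref{MainThm} establish that both $(u')^\infty$ and $(v')^\infty$ divide each of $s_\varphi$ and $t_\varphi$, so every prime factor of $u'$ or $v'$ lies in $\{p_1, \ldots, p_d\}$. Writing $u' = \prod p_i^{\delta_i}$ and $v' = \prod p_i^{\epsilon_i}$, with $\delta_i \epsilon_i = 0$ for each $i$ since $\gcd(u',v')=1$, we obtain $[\theta] = \Phi(\delta_1 - \epsilon_1, \ldots, \delta_d - \epsilon_d)$.

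For injectivity, suppose $\Phi(n_1, \ldots, n_d) = 0$, i.e.\ $\theta_u \circ \theta_v^{-1} = \psi'$ for some $\psi' \in \overline{\Inn}(\cT_\varphi)$, where by construction $\gcd(u, v) = 1$. Then the single automorphism $\psi'$ admits two factorizations of the form required by the uniqueness clause of Theorem \ref{MainThm}: namely $\theta_u \circ \theta_v^{-1} \circ \id$ and $\theta_1 \circ \theta_1^{-1} \circ \psi'$, both with coprime shift indices. Uniqueness forces $u = v = 1$, whence each $n_i = 0$.

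The main obstacle is not mathematical but bookkeeping: one must extract from the proof of Theorem \ref{MainThm} the precise assertion that the primes appearing in $u'$ and $v'$ are common primes of $s_\varphi$ and $t_\varphi$, and then invoke the uniqueness clause on the correct pair of factorizations. Once these points are in place, the identification $\Out(\cT_\varphi) \simeq \mathbb{Z}^d$ follows immediately.
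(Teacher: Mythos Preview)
Your argument is correct and is precisely the natural way to derive the corollary from Theorem~\ref{MainThm}; the paper itself gives no explicit proof, treating the statement as an immediate consequence of that theorem. Your extraction of the divisibility conditions $u^\infty \mid s_\varphi, t_\varphi$ and $v^\infty \mid s_\varphi, t_\varphi$ from the proof of Theorem~\ref{MainThm}, together with your use of the uniqueness clause for injectivity, are exactly the details one needs to supply.
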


%%%%%%%%%%%%%%%%%%%%%%%%%%%%%%%%%%%%%%%%%%
%%%%%%%%%%%%%%%%%%%%%%%%%%%%%%%%%%%%%%%%%%
%%%%%%%%%%%%%%%%%%%%%%%%%%%%%%%%%%%%%%%%%%

\section{Tensoring TUHF algebras}

The following section provides a technique to create new automorphism groups from old. 
To this end, suppose that $\cT_\varphi = \overline{\cup_{n=1}^\infty \cT_{k_n}}$ and $\cT_\psi = \overline{\cup_{n=1}^\infty \cT_{j_n}}$ are TUHF algebras. 

We can create a new TUHF algebra
\[
\cT_{\varphi\otimes \psi}= \overline{\cup_{n=1}^\infty \cT_{k_n j_n}} 
\]
with unital embeddings $\varphi_n\otimes\psi_n : \cT_{k_n j_n} \rightarrow \cT_{k_{n+1} j_{n+1}}$ defined by tensoring the old embeddings
\[
\varphi_n\otimes\psi_n(A) = \varphi_n\otimes\psi_n([A_{i,i'}]_{i,i'=1}^{k_n}) = (\varphi_n\otimes I_{j_{n+1}})([\psi_n(A_{i, i'})]_{i,i'=1}^{k_n}).
\]
Note that the $\psi_n$ are $*$-extendable to all of $M_{j_n}$, meaning that $\psi_n$ is the restriction of a unital C$^*$-embedding from $M_{j_n}$ into $M_{j_{n+1}}$, which is used when $i<i'$ in the block matrix.
Therefore, 
\[
\cT_{\varphi\otimes\psi} = \overline{\cup_{n=1}^\infty \cT_{k_n j_n}} \supsetneq \overline{\cup_{n=1}^\infty \cT_{k_n}\otimes \cT_{j_n}} = \cT_\varphi \otimes \cT_\psi.
\]
The new TUHF algebra is thus strictly bigger than the tensor product of the two previous algebras, but it inherits the automorphic structure of the two. It should be noted that this tensor operation is not commutative. That is, $\cT_{\varphi\otimes \psi}$ and $\cT_{\psi\otimes\varphi}$ need not be isomorphic.

This new embedding gives that $M(\cT_{\varphi\otimes\psi}) = M(\cT_\varphi)\times M(\cT_\psi)$ with the order $((x_1,x_2), (y_1,y_2)) \in R(\cT_{\varphi\otimes\psi})$ if and only if 
$(x_1,y_1) \in R(\cT_\varphi)$ and $(x_2,y_2)\in R(\cT_\psi)$ if $x_1 = y_1$.

In the following, $G^{\oplus \infty}$ refers to the infinite direct sum of a group $G$, a subgroup of the infinite direct product where elements are infinite tuples with all but a finite number of entries equal to the identity.

\begin{theorem}
Let $\cT_\varphi$ and $\cT_\psi$ be TUHF algebras then 
\[
 \Aut(\cT_\psi)^{\oplus \infty} \rtimes \Aut(\cT_\varphi) \ \ \subseteq \ \ \Aut(\cT_{\varphi\otimes\psi}).
\]
\end{theorem}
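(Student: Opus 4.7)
The plan is to invoke Theorem \ref{PowerThm} and build the claimed subgroups at the level of binary-relation preserving homeomorphisms of $M(C_{\varphi\otimes\psi}) = X_\varphi \times X_\psi$. For any automorphism $\gamma$ of a TUHF algebra, let $f_\gamma$ denote the induced homeomorphism of the spectrum of its diagonal masa.

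First, I would embed $\Aut(\cT_\varphi)$ via $\alpha \mapsto \hat\alpha$, where $\hat\alpha(x, y) = (f_\alpha(x), y)$. A case analysis shows $\hat\alpha$ preserves the lex-style relation $R(\cT_{\varphi\otimes\psi})$: when $x_1 \neq x_2$ the first-coordinate condition is preserved by $f_\alpha$ and the second-coordinate clause is vacuous; when $x_1 = x_2$ we have $f_\alpha(x_1) = f_\alpha(x_2)$ and the second-coordinate relation is untouched. Theorem \ref{PowerThm} then produces $\hat\alpha \in \Aut(\cT_{\varphi\otimes\psi})$, and $\alpha \mapsto \hat\alpha$ is visibly a group homomorphism.

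Next I would embed $\Aut(\cT_\psi)^{\oplus \infty}$ via a fiberwise construction. Since $X_\varphi$ is a Cantor-type space, fix a countable pairwise disjoint family $\{V_n\}_{n \in \mathbb{N}}$ of non-empty clopen subsets of $X_\varphi$. For a tuple $(\beta_n) \in \Aut(\cT_\psi)^{\oplus \mathbb{N}}$ with $\beta_n = \id$ outside a finite set, define
\[
\widehat{(\beta_n)}(x,y) = \begin{cases} (x, f_{\beta_n}(y)) & \text{if } x \in V_n, \\ (x, y) & \text{if } x \notin \bigcup_n V_n. \end{cases}
\]
This is a homeomorphism, as only finitely many $V_n$ contribute nontrivially and each is clopen. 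Preservation of $R(\cT_{\varphi\otimes\psi})$ is immediate: the first coordinate is fixed, and when $x_1 = x_2$ both points lie in the same $V_n$ or both lie outside $\bigcup_n V_n$, so a single homeomorphism acts on the second coordinates and preserves $R(\cT_\psi)$.

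For the semidirect structure I would observe that $\hat\alpha \circ \widehat{(\beta_n)} \circ \hat\alpha^{-1}(x,y) = (x, f_{\beta_n}(y))$ precisely when $x \in f_\alpha(V_n)$, so conjugation by $\hat\alpha$ transports the fiberwise data to the family $\{f_\alpha(V_n)\}$. Since any element of $\Aut(\cT_\varphi)$ maps each minimal-projection cylinder to a finite disjoint union of minimal-projection cylinders at a higher level (via Lemma \ref{normalizer} and Lemma \ref{unclosed}), one can enlarge the initial disjoint family to a countable disjoint collection that is stable under the $\Aut(\cT_\varphi)$-action, while preserving the cofinitely-trivial condition under conjugation. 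The main obstacle is precisely this combinatorial bookkeeping: the natural candidate for a stable index set, namely all minimal-projection cylinders at all levels, is nested rather than pairwise disjoint, so one must work with a countable disjoint refinement on which the $\Aut(\cT_\varphi)$-action is by finite-to-finite correspondences, taking care to preserve finite support throughout.
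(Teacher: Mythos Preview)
Your embedding of $\Aut(\cT_\varphi)$ via $\hat\alpha = f_\alpha \times \id$ is exactly what the paper does, and your fiberwise embedding of $\Aut(\cT_\psi)^{\oplus\infty}$ is a legitimate injection. The genuine gap is in the semidirect product step, and you have correctly located it yourself: for a fixed disjoint family $\{V_n\}$, the image of your copy of $\Aut(\cT_\psi)^{\oplus\infty}$ is \emph{not} normalised by the image of $\Aut(\cT_\varphi)$, because conjugation replaces $\{V_n\}$ by $\{f_\alpha(V_n)\}$. Your proposed remedy, enlarging to a countable pairwise disjoint family stable under the $\Aut(\cT_\varphi)$-action, does not exist in general: any $\Aut(\cT_\varphi)$-invariant collection of clopens closed under the operations you need will contain nested sets, as you note, and there is no canonical disjoint refinement that is itself invariant.

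The paper avoids this by abandoning disjointness altogether and working with the \emph{nested} system of minimal-projection cylinders directly. For each level $n$ one has the partition $X_\varphi = \dot\bigcup_{j=1}^{k_n} X_j$ by the cylinders of $e_j^{(k_n)}$, giving $\Aut(\cT_\psi)^{k_n} \hookrightarrow \Aut(\cT_{\varphi\otimes\psi})$. The connecting maps $\Aut(\cT_\psi)^{k_n} \to \Aut(\cT_\psi)^{k_{n+1}}$ are the repetition maps induced by $\varphi_n$, and the direct limit $\varinjlim \Aut(\cT_\psi)^{k_n}$ (equivalently, the group of locally constant functions $X_\varphi \to \Aut(\cT_\psi)$) is what the paper identifies with $\Aut(\cT_\psi)^{\oplus\infty}$. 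This subgroup \emph{is} normalised by $\Aut(\cT_\varphi)$, since $f_\alpha$ sends a level-$n$ partition by minimal cylinders to a level-$n'$ partition by minimal cylinders, so conjugation stays inside the direct system. That is the bookkeeping device you were missing: rather than a disjoint index set with an action, use a directed system of finite partitions compatible with the embeddings $\varphi_n$, on which $\Aut(\cT_\varphi)$ acts cofinally.
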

\begin{proof}

Clearly $\Aut(\cT_\varphi) \hookrightarrow \Aut(\cT_{\varphi\otimes\psi})$ since if $\theta$ is an order preserving homeomorphism of $M(\cT_\varphi)$ then $\theta\times \id$ is an order preserving homeomorphism of $M(\cT_{\varphi\otimes\psi}) = M(\cT_\varphi)\times M(\cT_\psi)$; and so by Theorem \ref{PowerThm} we get an induced automorphism on $\cT_{\varphi\otimes\psi}$. The same argument works for the embedding $\Aut(\cT_\psi) \hookrightarrow \Aut(\cT_{\varphi\otimes\psi})$ as well.

Moreover, we see that if $X\subset M(\cT_\varphi)$ is a clopen subset and $\theta$ is an order preserving homeomorphism of $M(\cT_\psi)$ then 
\[
\id_{X}\times \theta + \id_{X^C}\times \id
\]
is an also order preserving homeomorphism of $M(\cT_{\varphi\otimes\psi})$.
Since clopen subsets of $M(\cT_\varphi)$ are in bijective correspondence with the projections of $\cT_\varphi$ then for each $n\geq 1$ we see that 
\[
\id_{X_1}\times \theta_1 + \cdots + \id_{X_{k_n}}\times \theta_{k_n}
\]
is an order preserving homeomorphism where $X_j$ is the clopen subset associated with $e_{j}^{(k_n)} \in \cT_{k_n}$ and $\theta_j$ is an order preserving homeomorphism on $M(\cT_\psi)$. Thus, $\Aut(\cT_\psi)^{k_n} \hookrightarrow \Aut(\cT_{\varphi\otimes\psi})$.

Therefore, we have that $\underrightarrow\lim \Aut(\cT_\psi)^{\oplus k_n} \subset \Aut(\cT_{\varphi\otimes\psi})$ where the direct limit has the following injective homomorphisms:
$\tilde\varphi_n:\Aut(\cT_\psi)^{\oplus k_n} \rightarrow \Aut(\cT_\psi)^{\oplus k_{n+1}}$ where
\[ 
\tilde\varphi_n(\gamma_1,\cdots, \gamma_{k_n}) \ = \
(\gamma_{i_1},\gamma_{i_2},\cdots, \gamma_{i_{k_{n+1}}}),
\]
with $e_{j}^{(k_{n+1})} \leq \varphi_{n}(e_{i_j}^{(k_n)})$, for $1\leq j\leq k_{n+1}$.
Note that the direct limit $\underrightarrow\lim \Aut(\cT_\psi)^{\oplus k_n}$ is equal to the infinite direct sum $\Aut(\cT_\psi)^{\oplus \infty}$.
 
Finally, we need to describe the action of $\Aut(\cT_\varphi)$ on the direct limit.
Taking $\theta$ and $\gamma$ as order preserving homeomorphisms in $M(\cT_\psi)$ and $M(\cT_\varphi)$ respectively,  and $X$ clopen in $M(\cT_\varphi)$ we get that 
\[
(\gamma\times \id) \circ (\id_X \times \theta + \id_{X^C}\times \id)\circ (\gamma^{-1}\times \id) = \id_{\gamma(X)} \times \theta + \id_{\gamma(X)^C}\times \id.
\]
Therefore, $\Aut(\cT_\psi)^{\oplus\infty} \rtimes \Aut(\cT_\varphi) \subseteq \Aut(\cT_{\varphi\otimes\psi})$. 
\end{proof}

\begin{corollary}
$\Out(\cT_\psi)^{\oplus \infty}\rtimes \Out(\cT_\varphi) \subseteq \Out(\cT_{\varphi\otimes \psi})$
\end{corollary}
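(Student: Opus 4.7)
The plan is to show that the injective homomorphism
\[
\Phi : \Aut(\cT_\psi)^{\oplus\infty}\rtimes \Aut(\cT_\varphi) \hookrightarrow \Aut(\cT_{\varphi\otimes\psi})
\]
constructed in the previous theorem descends to an injective homomorphism on outer automorphism groups. This requires verifying that $\Phi$ maps $\overline{\Inn}(\cT_\psi)^{\oplus\infty}\rtimes \overline{\Inn}(\cT_\varphi)$ into $\overline{\Inn}(\cT_{\varphi\otimes\psi})$, checking that the semidirect product structure passes cleanly to the quotient, and then showing that the induced map on the outer groups has trivial kernel.

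For the first step, if $\gamma\in\overline{\Inn}(\cT_\varphi)$ with $\gamma(A)=\lim_m U_m A U_m^*$ and $U_m\in U(C_\varphi)$, then $\gamma\times\id$ is implemented by the unitaries $U_m\otimes I\in U(C_{\varphi\otimes\psi})$ and is therefore approximately diagonal. Similarly, if $(\delta_1,\dots,\delta_{k_n})\in\overline{\Inn}(\cT_\psi)^{\oplus k_n}$ with $\delta_l(B)=\lim_m V_{l,m} B V_{l,m}^*$, then the associated automorphism $\sum_l \id_{X_l}\times \delta_l$ is implemented by the diagonal unitary $\sum_{l=1}^{k_n} e_l^{(k_n)}\otimes V_{l,m}\in U(C_{\varphi\otimes\psi})$, hence is also approximately diagonal. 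The semidirect product structure descends because the conjugation action of $\theta\in\Aut(\cT_\varphi)$ on $\Aut(\cT_\psi)^{\oplus\infty}$ is by permuting coordinates according to $\theta$'s action on the clopen sets $X_l$ corresponding to the projections $e_l^{(k_n)}$; by Proposition~\ref{diagAuto} this action is trivial on $\overline{\Inn}(\cT_\varphi)$, and it clearly preserves $\overline{\Inn}(\cT_\psi)^{\oplus\infty}$ since reindexing approximately inner automorphisms gives approximately inner automorphisms. Thus $\overline{\Inn}(\cT_\psi)^{\oplus\infty}\rtimes\overline{\Inn}(\cT_\varphi)$ is a normal subgroup whose quotient is $\Out(\cT_\psi)^{\oplus\infty}\rtimes\Out(\cT_\varphi)$.

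For injectivity of the induced map, suppose $\Phi(\{\gamma_l\}_{l=1}^{k_n},\theta)\in\overline{\Inn}(\cT_{\varphi\otimes\psi})$. By Proposition~\ref{diagAuto} it fixes every projection of $\cT_{\varphi\otimes\psi}$. Applying it to $p\otimes I$ for an arbitrary $p\in\proj(\cT_\varphi)$ yields $\theta(p)\otimes I$, which forces $\theta(p)=p$ and hence $\theta\in\overline{\Inn}(\cT_\varphi)$. Knowing now that $\theta$ fixes each $e_l^{(k_n)}$, applying $\Phi(\{\gamma_l\},\theta)$ to $e_l^{(k_n)}\otimes q$ for $q\in\proj(\cT_\psi)$ yields $e_l^{(k_n)}\otimes\gamma_l(q)$, forcing $\gamma_l(q)=q$; so each $\gamma_l\in\overline{\Inn}(\cT_\psi)$.

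The main expected obstacle is bookkeeping around the semidirect product: making sure that the quotient by $\overline{\Inn}$ on both sides really produces a semidirect product of the outer groups. This reduces to the observation noted above that approximately diagonal automorphisms of $\cT_\varphi$ act trivially on the indexing set of the direct sum, which is immediate from Proposition~\ref{diagAuto}. Everything else is a straightforward consequence of that proposition combined with the explicit form of $\Phi$ from the theorem.
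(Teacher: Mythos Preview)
Your proof is correct but follows a different route from the paper's. The paper's argument is a two–line application of Theorem~\ref{semidirect}: since $\Out(\cT_\varphi)$, $\Out(\cT_\psi)$, and $\Out(\cT_{\varphi\otimes\psi})$ are each realised as the concrete subgroup $\cO$ of automorphisms that act as regular embeddings at every finite level, one only has to observe that the construction in the preceding theorem sends regular-at-each-level automorphisms to regular-at-each-level automorphisms, so the embedding restricts directly to the $\cO$ subgroups.

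You instead work with the quotient description of $\Out$ and show that $\Phi$ carries $\overline{\Inn}(\cT_\psi)^{\oplus\infty}\rtimes\overline{\Inn}(\cT_\varphi)$ into $\overline{\Inn}(\cT_{\varphi\otimes\psi})$, that the semidirect product passes to the quotient, and that the induced map has trivial kernel. This uses only Proposition~\ref{diagAuto} rather than the full splitting of Theorem~\ref{semidirect}, at the cost of more bookkeeping (normality of the inner part, the explicit computation of $\Phi(\{\gamma_l\},\theta)$ on $p\otimes I$ and $e_l^{(k_n)}\otimes q$). The paper's approach is quicker because the section $\Out\hookrightarrow\Aut$ lets one avoid the descent argument entirely; your approach is more self-contained and would survive in settings where such a splitting is not available.
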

\begin{proof}
By Theorem \ref{semidirect} the outer automorphisms of both $\cT_\varphi$ and $\cT_\psi$ are well defined subgroups given by those automorphisms which are regular embeddings when restricted to a finite level. This property is clearly preserved in the proof of the last theorem and so the result follows.
\end{proof}

This implies that there are non-abelian outer automorphism groups. However, these groups may not be equal as in the following example:

\begin{example}
Let $\cT_\varphi$ be the standard embedding algebra for $2^\infty$ and $\cT_\psi$ be the nest embedding algebra for $2^\infty$.
Then $\cT_{\varphi\otimes\psi}$ is the alternating algebra for $2^\infty$. Hence, $\Out(\cT_{\varphi\otimes\psi}) = \mathbb Z \neq \{0\} = \Out(\cT_\psi)^{\oplus\infty} \rtimes \Out(\cT_\varphi)$.
\end{example}

%%%%%%%%%%%%%%%%%%%%%%%%%%%%%%%%%%%%%%%%%%%%%%%
%%%%%%%%%%%%%%%%%%%%%%%%%%%%%%%%%%%%%%%%%%%%%%%
%%%%%%%%%%%%%%%%%%%%%%%%%%%%%%%%%%%%%%%%%%%%%%%

\section{Dilation theory}

All the definitions in this last section come from the paper of Davidson and Katsoulis \cite{DavKat}.
An operator algebra $\cA$ is said to be {\em semi-Dirichlet} if $\cA^*\cA \subset \overline{\cA + \cA^*}$ when $\cA$ is considered as a subspace of its C$^*$-envelope. Moreover, a unital operator algebra $\cA$ is {\em Dirichlet} if $\cA + \cA^*$ is norm dense in its C$^*$-envelope, $C^*_e(\cA)$.

\begin{lemma}
Triangular UHF algebras are Dirichlet.
\end{lemma}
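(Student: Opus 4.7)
The plan is elementary: I would reduce the density statement to the finite level and use the fact that at each level, upper triangular plus lower triangular exhausts the full matrix algebra.

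First, recall from the introduction that the C$^*$-envelope is $C^*_e(\cT_\varphi) = \mathfrak{A}_\varphi = \overline{\bigcup_n M_{k_n}}$, since any UHF algebra is simple. So what needs to be shown is that $\cT_\varphi + \cT_\varphi^*$ is norm dense in $\mathfrak{A}_\varphi$.

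The key observation is that for each $n$, one has the (algebraic) identity
\[
\cT_{k_n} + \cT_{k_n}^* = M_{k_n}.
\]
Indeed, any $A = [a_{ij}] \in M_{k_n}$ splits as the sum of its upper triangular part $B \in \cT_{k_n}$ (containing the $a_{ij}$ with $i \le j$) and its strictly lower triangular part, which is the adjoint of the strictly upper triangular matrix whose $(j,i)$ entry is $\overline{a_{ij}}$ for $i > j$, hence lies in $\cT_{k_n}^*$. Since the embeddings $\varphi_n$ satisfy $\varphi_n(\cT_{k_n}) \subset \cT_{k_{n+1}}$ by definition, the inclusions $\cT_{k_n} \hookrightarrow \cT_\varphi$ and $M_{k_n} \hookrightarrow \mathfrak{A}_\varphi$ are compatible, so every element of the algebraic union $\bigcup_n M_{k_n}$ already lies in $\cT_\varphi + \cT_\varphi^*$.

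Finally, because $\bigcup_n M_{k_n}$ is norm dense in $\mathfrak{A}_\varphi$ by definition of the UHF algebra, the subspace $\cT_\varphi + \cT_\varphi^*$ is norm dense in $\mathfrak{A}_\varphi = C^*_e(\cT_\varphi)$, which is the Dirichlet property. There is no genuine obstacle here; the point is simply that the triangular structure at each finite level exhausts the matrix algebra after adjoining adjoints, and this passes to the inductive limit.
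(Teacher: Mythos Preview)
Your proof is correct and follows essentially the same approach as the paper: both use the finite-level identity $\cT_{k_n} + \cT_{k_n}^* = M_{k_n}$ together with the identification $C^*_e(\cT_\varphi) = \mathfrak{A}_\varphi$ coming from simplicity of the UHF algebra. The paper is terser, simply asserting $\mathfrak{A}_\varphi = \cT_\varphi + \cT_\varphi^*$, whereas you spell out the density argument explicitly; the content is the same.
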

\begin{proof}
For a TUHF algebra $\cT_\varphi$ we have the much stronger condition that $\mathfrak A_\varphi = \cT_\varphi + \cT_\varphi^*$. Therefore, because the UHF algebra is simple we immediately get the desired result.
\end{proof}

A unital operator algebra $\cA$ is said to have the {\em Fuglede property} if for every faithful unital $*$-representation $\pi$ of $C^*_e(\cA)$ we have $\pi(\cA)' = \pi(C^*_e(\cA))'$.

\begin{lemma}
Triangular UHF algebras have the Fuglede property.
\end{lemma}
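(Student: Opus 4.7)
The plan is to reduce the Fuglede statement to a level-by-level finite-dimensional computation. Let $\pi \colon \mathfrak A_\varphi \to B(H)$ be a faithful unital $*$-representation and fix $T \in \pi(\cT_\varphi)'$. The inclusion $\pi(\mathfrak A_\varphi)' \subseteq \pi(\cT_\varphi)'$ is automatic, so I only need the reverse. Since $\bigcup_{n} \pi(M_{k_n})$ is norm dense in $\pi(\mathfrak A_\varphi)$, it is enough to verify $T \in \pi(M_{k_n})'$ for every $n$; and of course, $T$ automatically commutes with every element of $\pi(\cT_{k_n}) \subset \pi(\cT_\varphi)$.

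Now fix $n$. Because $M_{k_n}$ is simple, $\pi|_{M_{k_n}}$ is unitarily equivalent to a multiple of the identity representation: up to a unitary, $H = \mathbb C^{k_n} \otimes K$ for some Hilbert space $K$, and $\pi(a) = a \otimes I_K$ for all $a \in M_{k_n}$. Writing $T = [T_{ij}]_{i,j=1}^{k_n}$ in the associated $k_n \times k_n$ block form with $T_{ij} \in B(K)$, the commutation of $T$ with the diagonal projections $\pi(e_{i,i}^{k_n}) = e_{i,i}^{k_n}\otimes I_K$ forces $T_{ij} = 0$ for $i\neq j$, so $T$ is block diagonal. Commutation with the superdiagonal units $\pi(e_{i,i+1}^{k_n}) = e_{i,i+1}^{k_n}\otimes I_K$ then forces $T_{ii} = T_{i+1,i+1}$ for each $i$. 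Consequently $T = I_{k_n}\otimes S$ for a single $S \in B(K)$, which plainly commutes with every element of $\pi(M_{k_n}) = \{a\otimes I_K : a\in M_{k_n}\}$. In essence, this step is the observation that $\cT_{k_n}' = \mathbb C I$ inside $M_{k_n}$, amplified by the multiplicity space.

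Doing this for every $n$ yields $T \in \pi(M_{k_n})'$ for all $n$, so $T$ commutes with the norm-dense subalgebra $\bigcup_n \pi(M_{k_n})$ of $\pi(\mathfrak A_\varphi)$, hence with all of $\pi(\mathfrak A_\varphi)$. The only conceptual subtlety I anticipate is that the unitary putting $\pi|_{M_{k_n}}$ into the standard amplified form depends on $n$; but since we only need to verify the commutation relation at each finite level separately, and $T$ itself is a fixed operator on $H$ independent of any such choice, this causes no difficulty. The main ``obstacle'' is really just packaging this cleanly, rather than any deep technical issue.
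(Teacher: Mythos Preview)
Your proof is correct and follows essentially the same approach as the paper: both reduce to the level-wise equality $\pi(\cT_{k_n})' = \pi(M_{k_n})'$ and then pass to the dense union. The paper simply asserts this equality in one line, whereas you have supplied the explicit block-matrix computation (via the amplification form of $\pi|_{M_{k_n}}$) that justifies it.
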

\begin{proof}
Suppose $\pi$ is a faithful unital $*$-representation of $C_e^*(\cT_\varphi) = \overline{\cup_{k_n} M_{k_n}}$.
Then $\pi(\cT_{k_n})' = \pi(M_{k_n})'$ and so $\pi(\cT_\varphi)' = \pi(C_e^*(\cT_\varphi))'$.
\end{proof}

An operator algebra $\cA$ has {\em isometric commutant lifting (ICLT)} if whenever there is a completely contractive representation $\rho:\cA \rightarrow B(\cH)$ commuting with a contraction $X$, there is a coextension $\sigma$ of $\rho$ and an isometric coextension $V$ of $X$ on a common Hilbert space $\cK$ so that $\sigma(\cA)$ and $V$ commute.

\begin{proposition}
Triangular UHF algebras have isometric commutant lifting.
\end{proposition}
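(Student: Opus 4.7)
The plan is to reduce the statement to a general theorem from \cite{DavKat}. Davidson and Katsoulis prove there that any unital operator algebra which is Dirichlet and has the Fuglede property automatically satisfies isometric commutant lifting. The two preceding lemmas establish precisely these properties for $\cT_\varphi$, so the proof should reduce to verifying that the hypotheses of their theorem are met and then quoting it.

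To see why Dirichlet and Fuglede together suffice, I would recall their strategy. Given a completely contractive representation $\rho:\cT_\varphi \rightarrow B(\cH)$ commuting with a contraction $X$, the Dirichlet condition $\cT_\varphi + \cT_\varphi^* = \mathfrak A_\varphi$ ensures that $\rho$ extends (after passing to a Stinespring/Arveson dilation) to a unital $\ast$-representation $\pi$ of $\mathfrak A_\varphi = C^*_e(\cT_\varphi)$ on a larger Hilbert space in which $\cH$ sits as a semi-invariant subspace for $\pi(\cT_\varphi)$. Classical Sz.-Nagy dilation applied to $X$ produces an isometric coextension $V$ on a Hilbert space $\cK$. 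The Fuglede property is exactly what guarantees that the commutation of $\pi(\cT_\varphi)$ with $X$ on $\cH$ can be promoted to commutation with $V$ on $\cK$: since $\pi(\cT_\varphi)' = \pi(\mathfrak A_\varphi)'$, any operator commuting with the non-selfadjoint image automatically commutes with the full $\ast$-algebra, and this is preserved when one takes isometric extensions. Compressing $\pi$ to an appropriate coinvariant subspace of $\cK$ then yields the required coextension $\sigma$ of $\rho$ that commutes with $V$.

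The main obstacle, were one to reproduce the argument from scratch, lies in the tension between the one-sided nature of the coextension required by ICLT and the two-sided $\ast$-dilation produced by Arveson's theorem, together with the need to keep track of how $X$ lifts compatibly through both dilations. Since Davidson and Katsoulis have carried out this bookkeeping in full generality for any Dirichlet algebra with the Fuglede property, the cleanest route is simply to invoke their result; the substantive work in this paper was the verification of Dirichlet and Fuglede for $\cT_\varphi$, which is already done.
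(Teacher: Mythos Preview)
Your proposal rests on invoking a theorem from \cite{DavKat} of the form ``Dirichlet $+$ Fuglede $\Rightarrow$ ICLT,'' but no such implication is established there, and indeed the paper's own logical flow runs in the opposite direction: ICLT is proved here by hand and then fed into the Davidson--Katsoulis machinery (ICLT $\Rightarrow$ MCLT $\Rightarrow$ CLT, CLT$^*$ $\Rightarrow$ Ando). If the shortcut you describe were available, the author would not need the direct argument at all.

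Your sketch also contains a substantive gap. The Fuglede property says $\pi(\cT_\varphi)' = \pi(\mathfrak A_\varphi)'$ for a faithful $*$-representation $\pi$ of the envelope, but the contraction $X$ you start with lies in the commutant of $\rho(\cT_\varphi)$ on $\cH$, not in the commutant of the dilated $\pi(\cT_\varphi)$ on $\cK$. Passing from one to the other is precisely the hard step: the Arveson--Stinespring dilation of $\rho$ and the Sz.-Nagy dilation of $X$ live on unrelated spaces, and there is no general mechanism forcing them to commute. The paper handles this by working level by level: at each finite stage $\cT_{k_n}\subset M_{k_n}$ one can invoke a nest-algebra commutant lifting result (\cite[Corollary~20.23]{Dav3}) to produce a contraction $Y_n$ on $\cK$ commuting with $\pi|_{M_{k_n}}$ and compressing correctly; a \textsc{wot}-limit $Y$ then commutes with all of $\pi$, and only afterwards is $Y$ dilated to an isometry. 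That finite-dimensional lifting step, together with the compactness argument, is the missing ingredient in your outline.
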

\begin{proof}
Let $\rho$ be a contractive representation of $\cT_\varphi$ on $\cH$ commuting with a contraction $X$. Without loss of generality assume that $\rho$ is also unital. Now $\rho$ is completely contractive when restricted to any $\cT_{k_n}$ and thus on a dense set of $\cT_\varphi$. Hence, $\rho$ is a completely contractive representation. By Arveson's Extension Theorem and Stinespring's Dilation Theorem there is a $*$-homomorphism $\pi$ and an isometry $V: \cH \rightarrow \cK$ such that $\rho(a) = V^*\pi(a)V, \forall a\in \cT_\varphi$. This argument was given by Paulsen and Power in \cite{PaulsenPower} but can also be found in \cite{Dav3}.

For each $n\geq 1$ we know that $X$ commutes with $\rho(\cT_{k_n})$ and so by  \cite[Corollary 20.23]{Dav3} there is an operator $Y_n$ on $\cK$ commuting with $\pi|_{M_{k_n}}$ such that $\|Y_n\| = \|X\|$ and
\[
P(\cH)Y_n^m\pi(A)|\cH = X^m\rho(A), \ \ \forall m\geq 0, A\in \cT_{k_n}.
\]
Since all the $Y_n$ are bounded by $\|X\|\leq 1$ there is a subsequence converging in the weak operator topology to $Y\in B(\cK)$ which clearly commutes with $\pi$.
Now, dilate $Y$ to a lower triangular unitary $V$ on $\cK^{(\infty)}$ which commutes with $\pi^{(\infty)}$ because $\pi$ commutes with $Y^*$ as well. 
Thus, by restricting to the coextension part of the dilation we see that we have a coextension of $\rho$ which commutes with an isometric coextension of $X$. Therefore, $\cT_\varphi$ has property ICLT.
\end{proof}

Let $\rho$ be a representation of a unital operator algebra $\cA$. Then a coextension $\sigma$ of $\rho$ is called {\em fully extremal} if whenever $\pi$ is a dilation of $\sigma$ which is also a coextension of $\rho$ then $\pi$ is just a direct sum, $\pi = \sigma \oplus \sigma'$.

\begin{definition}
A unital operator algebra $\cA$ has the {\em Ando property} if whenever $\rho$ is a representation of $\cA$ and $X$ is a contraction commuting with $\rho(\cA)$, then there is a fully extremal coextension $\sigma$ of $\rho$ commuting with an isometric coextension of $X$. 
\end{definition}

\begin{theorem}
Triangular UHF algebras have the Ando property.
\end{theorem}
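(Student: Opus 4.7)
The plan is to combine the three structural properties just established—Dirichlet, Fuglede, and isometric commutant lifting—into the Ando property, following the Davidson--Katsoulis framework in which these three properties together force Ando essentially formally.

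First I would take a unital completely contractive representation $\rho:\cT_\varphi\to B(\cH)$ and a contraction $X\in B(\cH)$ commuting with $\rho(\cT_\varphi)$. Applying the ICLT proposition above, there exists a coextension $\sigma_0$ of $\rho$ on some Hilbert space $\cK_0\supseteq \cH$ together with an isometric coextension $V_0$ of $X$ on $\cK_0$ such that $\sigma_0(\cT_\varphi)$ commutes with $V_0$. A Zorn's lemma argument then produces a coextension $\sigma$ of $\rho$ which is maximal among coextensions of $\rho$ lying above $\sigma_0$; such a maximal coextension is fully extremal in the required sense, since any further coextension-of-$\rho$ dilation of $\sigma$ must split off as a direct summand.

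The most delicate step is to construct an isometric coextension $V$ of $X$ on the space of $\sigma$ that commutes with $\sigma(\cT_\varphi)$. For this one passes to the minimal Stinespring dilation $\pi:\mathfrak A_\varphi\to B(\cL)$ of $\sigma$, which exists as a genuine $*$-representation of $\mathfrak A_\varphi = C^*_e(\cT_\varphi)$ because $\cT_\varphi$ is Dirichlet. Starting from the commutation $V_0 \in \sigma_0(\cT_\varphi)'$ and using the rigidity of minimal Stinespring dilations, the isometry $V_0$ can be dilated to an isometry $V$ on $\cL$ commuting with $\pi(\cT_\varphi)$; by the Fuglede property, $V$ then commutes with all of $\pi(\mathfrak A_\varphi)$, which is a $*$-algebra whose commutant is a von Neumann algebra. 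Compressing back to the subspace carrying $\sigma$ produces the required isometric coextension of $X$ that commutes with $\sigma(\cT_\varphi)$.

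The main obstacle will be this last step: ensuring that commutation with $\sigma_0(\cT_\varphi)$ is preserved when one passes from the ICLT coextension $\sigma_0$ up to the fully extremal $\sigma$ via its Stinespring dilation. The Fuglede property is essential here, as it upgrades commutation with $\pi(\cT_\varphi)$ to commutation with all of $\pi(\mathfrak A_\varphi)$ and makes the relevant commutant a von Neumann algebra, providing enough flexibility to transport $V_0$ through the enlargement to an honest isometric coextension commuting with $\sigma$.
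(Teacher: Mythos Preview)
Your plan invokes the same three ingredients the paper uses---Dirichlet, Fuglede, and ICLT---but the paper's proof is purely a citation chain through the Davidson--Katsoulis hierarchy: ICLT $\Rightarrow$ MCLT \cite[Cor.~7.4]{DavKat}, then Dirichlet $+$ MCLT $\Rightarrow$ CLT and CLT$^*$ \cite[Cor.~5.18]{DavKat}, and finally Fuglede $+$ CLT $+$ CLT$^*$ $\Rightarrow$ Ando \cite[Cor.~9.12]{DavKat}. No explicit construction is carried out; the work is delegated entirely to \cite{DavKat}.

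Your proposal instead tries to carry out the construction by hand, and there is a real gap at the step you yourself flag as the main obstacle. After obtaining the ICLT pair $(\sigma_0,V_0)$ on $\cK_0$ and then enlarging $\sigma_0$ to a fully extremal $\sigma$ on some $\cK\supseteq\cK_0$ via Zorn, you propose to dilate $V_0$ to an isometry on the Stinespring space $\cL$ of $\sigma$ by ``rigidity of minimal Stinespring dilations''. But $V_0$ lives on $\cK_0$, not on $\cK$ or $\cL$, and there is no canonical mechanism that transports $V_0$ along the enlargement $\cK_0\subseteq\cK\subseteq\cL$; Stinespring rigidity compares two dilations of the \emph{same} completely positive map, not dilations of $\sigma_0$ versus $\sigma$. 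Even if one produced an isometry $V$ on $\cL$ in $\pi(\cT_\varphi)'=\pi(\mathfrak A_\varphi)'$, compressing $V$ to $\cK$ need not yield an isometry, and nothing in the sketch forces $P_{\cH}V|_{\cH}=X$. These are precisely the issues that the intermediate properties MCLT, CLT, and CLT$^*$ in \cite{DavKat} are designed to handle, typically via an inductive alternation between lifting the commutant and enlarging the representation. A single Zorn step followed by one Stinespring dilation does not bypass that work. If you want a self-contained argument, you would either need to reproduce that alternation, or exploit from the outset the fact (special to the Dirichlet case) that fully extremal coextensions of $\rho$ are exactly restrictions of $*$-representations of $\mathfrak A_\varphi$, and organize the whole proof around that identification rather than around the ICLT pair $(\sigma_0,V_0)$.
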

\begin{proof}
The following commutant lifting properties are all listed in \cite{DavKat} and will not be defined as they only are used as stepping stones in the proof below.

\cite[Corollary 7.4]{DavKat} gives that ICLT implies MCLT and  \cite[Corollary 5.18]{DavKat} gives that being Dirichlet and having MCLT implies CLT and CLT$^*$. Lastly, by  \cite[Corollary 9.12]{DavKat} having the Fuglede property, CLT and CLT$^*$ implies that triangular UHF algebras have the Ando property.
\end{proof}

If $\cA$ is an operator algebra and $\theta$ is an automorphism, the semicrossed product is the operator algebra 
\[
\cA \times_\theta \mathbb Z_+
\]
that encapsulates the dynamical system $(\cA, \theta)$. This first occurs in the work of Arveson \cite{Arveson} with a more modern treatment given by \cite{KakariadisKatsoulis}. In particular, this is the universal operator algebra generated by all covariant representations $(\rho, T)$ where $\rho$ is a completely contractive representation of $\cA$ and a contraction $T$ such that
\[
\rho(a)T = T\rho(\theta(a)), \ \ \forall a\in \cA.
\]
The following corollary says that the C$^*$-envelope of a semicrossed product of a TUHF algebra with an automorphism is in fact a full crossed product algebra.

\begin{corollary}
Let $\cT_\varphi$ be a TUHF algebra and $\theta\in\Aut(\cT_\varphi)$ then 
\[
C^*_e(\cT_\varphi \times_\theta \mathbb Z_+) = C^*_e(\cT_\varphi) \times_\theta \mathbb Z = \mathfrak A_\varphi \times_\theta \mathbb Z.
\]
\end{corollary}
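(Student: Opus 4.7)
The plan is to apply directly the dilation-theoretic machinery developed by Davidson and Katsoulis in \cite{DavKat}. The key fact from that paper is that for a unital operator algebra $\cA$ which is semi-Dirichlet and has the Ando property, every covariant pair $(\rho, T)$ for the dynamical system $(\cA, \theta)$ with $T$ a contraction dilates to a covariant pair $(\pi, U)$ in which $\pi$ extends to a $*$-representation of $C^*_e(\cA)$ and $U$ is a unitary implementing $\theta$ on the image. This dilation theorem is exactly what is needed to identify the $\ca$-envelope of the semicrossed product with the full crossed product $C^*_e(\cA) \times_\theta \mathbb Z$.

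To prepare the ground, I would first verify that both hypotheses are already in hand for $\cT_\varphi$: the earlier lemma gives that $\cT_\varphi$ is Dirichlet (hence in particular semi-Dirichlet), and the Ando-property theorem immediately preceding this corollary handles the other hypothesis. Second, I would recall from Section 2 that $C^*_e(\cT_\varphi) = \mathfrak A_\varphi$, since the ambient UHF algebra is simple and therefore must coincide with any $\ca$-algebra generated by a faithful completely isometric representation of $\cT_\varphi$. With these two ingredients, the cited Davidson--Katsoulis theorem applies directly and yields the chain of equalities
\[
C^*_e(\cT_\varphi \times_\theta \mathbb Z_+) \;=\; C^*_e(\cT_\varphi) \times_\theta \mathbb Z \;=\; \mathfrak A_\varphi \times_\theta \mathbb Z.
\]

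There is essentially no obstacle to overcome at this stage; the substantive work has already been carried out in the earlier lemmas of this section, culminating in the verification of the Ando property. The corollary is in effect a packaging statement: once Dirichlet plus Ando has been established, the Davidson--Katsoulis identification of the $\ca$-envelope of a semicrossed product by an isometric automorphism is automatic, and the final equality is simply the earlier identification $C^*_e(\cT_\varphi) = \mathfrak A_\varphi$.
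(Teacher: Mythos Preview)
Your proposal is correct and follows essentially the same approach as the paper: invoke the Davidson--Katsoulis theorem (specifically \cite[Theorem 12.3]{DavKat}) using the previously established Ando property, and then substitute $C^*_e(\cT_\varphi) = \mathfrak A_\varphi$. The paper's proof cites only the Ando property when invoking the theorem, while you also mention semi-Dirichlet; this is a harmless difference in how the hypotheses are packaged, since both properties are already in hand.
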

\begin{proof}
By \cite[Theorem 12.3]{DavKat} if $\theta$ is an isometric automorphism of $\cT_\varphi$ then because TUHF algebras have the Ando property $C_e^*(\cT_\varphi \times_\theta \mathbb Z_+) = C_e^*(\cT_\varphi) \times_\theta \mathbb Z$. Lastly, recall that $C_e(\cT_\varphi) \simeq \mathfrak A_\varphi$.
\end{proof}

We end with the following example:

\begin{example}
Suppose $\cT_\varphi$ is a TUHF algebra with the $2^\infty$ alternating embedding and consider the shift automorphism $\theta_2$. Now $\cT_\varphi$ is a non-selfadjoint subalgebra of the CAR algebra, $M_{2^\infty} = \bigotimes_{-\infty}^\infty M_2$. In this form $\theta_2$ extends to the so called Bernoulli shift on the CAR algebra, taking a tensor in $\bigotimes_{-\infty}^\infty M_2$ and shifting it to the right.

Bratteli, Kishimoto, R{\o}rdam and St{\o}rmer show in \cite{BKRS} that 
\[
M_{2^\infty} \times_{\theta_2} \mathbb Z \ \simeq \ \underrightarrow\lim M_{4^n}\otimes C(\mathbb T),
\]
a limit circle algebra with embeddings being two copies of the twice-around embedding. Moreover, this AT algebra is isomorphic to $M_{2^\infty} \otimes \mathfrak B$ where $\mathfrak B = \underrightarrow\lim M_{2^n}\otimes C(\mathbb T)$ is the Bunce-Deddens algebra \cite{BunceDeddens}, thanks to Mikael R\o rdam for pointing this last isomorphism out. Among other things, this implies that the crossed product is a unital simple C$^*$-algebra which falls into Elliott's classification.

Therefore, by the above Corollary: 
\[
C^*_e(\cT_\varphi \times_{\theta_2} \mathbb Z_+) \simeq M_{2^\infty} \otimes \mathfrak B.
\]
This leads to the question of whether the semicrossed product is itself isomorphic to a ``nice'' subalgebra of $M_{2^\infty} \otimes \mathfrak B$, for instance a tensor of two non-selfadjoint operator algebras sitting in the CAR algebra and the Bunce-Deddens algebra.

\end{example}

%%%%%%%%%%%%%%%%%%%%%%%%%%%%%%%%%%%%%%%%%%%%%%%
%%%%%%%%%%%%%%%%%%%%%%%%%%%%%%%%%%%%%%%%%%%%%%%
%%%%%%%%%%%%%%%%%%%%%%%%%%%%%%%%%%%%%%%%%%%%%%%

% The bibliography
\bibliographystyle{amsplain}

\end{document}